\newif\ifarxiv
\spnewtheorem{obs}{Observation}{\bfseries}{\itshape}
\title{Weakly and Strongly Fan-Planar Graphs}
\author{Otfried Cheong\inst{1}\orcidID{0000-0003-4467-7075}\and Henry F\"orster\inst{2}\orcidID{0000-0002-1441-4189}
\and Julia Katheder\inst{2}\orcidID{0000-0002-7545-0730}
Maximilian Pfister\inst{2}\orcidID{0000-0002-7203-0669 }\and 
Lena Schlipf\inst{2}\orcidID{0000-0001-7043-1867} }
\authorrunning{Cheong et al.}
\institute{SCALGO, \email{otfried@scalgo.com}
\and 
Wilhelm-Schickard-Institut f{\"u}r Informatik, Universit{\"a}t T{\"u}bingen, 
\email{\{henry.foerster, julia.katheder, maximilian.pfister, lena.schlipf\}@uni-tuebingen.de}}
\newcommand{\reffani}{(I)\xspace}
\newcommand{\reffanii}{(II)\xspace}
\newcommand{\reffaniii}{(III)\xspace}
\begin{document}
\ifarxiv\else
\pagestyle{empty}
\fi

\maketitle

\begin{abstract}
We study two notions of fan-planarity introduced by (Cheong et al., GD22), called weak and strong fan-planarity,
which separate two non-equivalent definitions of fan-planarity in the literature. 
We prove~that not every weakly fan-planar graph is strongly fan-planar, 
while the upper bound on the edge density is the same for both families.
  
\keywords{fan-planarity \and density \and weak vs. strong}
\end{abstract}

\section{Introduction}

Crossings in graph drawings are known to heavily impede readability~\cite{DBLP:conf/gd/Purchase97,DBLP:journals/ivs/WarePCM02}.  Unfortunately, however, minimizing the number of crossings is NP-complete~\cite{doi:10.1137/0604033}, while many real-world networks turn out to be non-planar. Fortunately, readable drawings of non-planar graphs can be obtained by limiting the \emph{topology}~\cite{DBLP:conf/gd/Mutzel96} or the \emph{geometry} of crossings~\cite{DBLP:conf/apvis/Huang07,DBLP:journals/vlc/HuangEH14}. Based on these experimental findings, the research direction of \emph{graph drawing beyond planarity}  has emerged. This line of research is dedicated to the study of so-called \emph{beyond planar} graph classes that are defined by forbidden edge-crossing patterns. More precisely, a graph belonging to such a class admits a drawing in which the forbidden pattern is absent. Important beyond planar graph classes are $k$-planar graphs, where the forbidden pattern is $k+1$ crossings on the same edge, $k$-quasiplanar graphs, where $k$ mutually crossing edges are prohibited, and RAC-graphs, where edges are not allowed to cross at non-right angles. We refer the interested reader to the survey by Didimo et al.~\cite{10.1145/3301281} and a recent book~\cite{Hong-Tokuyama} on beyond planarity.

In this paper, we study \emph{fan-planar graphs}, which admit \emph{fan-planar drawings}. In these drawings, each edge $e$ may only be crossed by a \emph{fan} of edges, that is a bundle of edges sharing a common endpoint, called \emph{anchor} of $e$, that all cross $e$ from the same side. Kaufmann and Ueckerdt introduced this graph class in 2014~\cite{DBLP:journals/corr/KaufmannU14} and described the aforementioned requirement with two forbidden patterns,  Pattern~\reffani and~\reffanii in Fig.~\ref{fig:fanplanar}: the first forbids two edges crossing $e$ to be non-adjacent whereas the second forbids crossings of~$e$ by adjacent edges with the common endpoint on different sides of~$e$. Since their introduction, fan-planar graphs have received a lot of attention in the scientific community; see \cite{Bekos2020} for an overview.

\begin{figure}[h]
  \centerline{\includegraphics{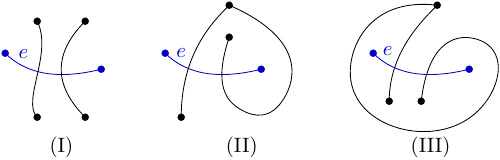}}
  \caption{Forbidden configurations.}
  \label{fig:fanplanar}
\end{figure}

Recently, Klemz et al.~\cite{DBLP:journals/jgaa/KlemzKRS23} pointed out a missing case in the proof 
of the edge density upper-bound in the preprint 
that introduced fan-planar graphs~\cite{DBLP:journals/corr/KaufmannU14}.
This case was consequently fixed in the journal
version~\cite{DBLP:journals/combinatorics/0001U22} by the original authors. 
However, in the process, they introduced a third forbidden pattern, 
namely Pattern~\reffaniii of Fig.~\ref{fig:fanplanar},
which is quite reminiscent of the previously defined Pattern~\reffanii. 
Namely, in both patterns, the drawing restricted to the edge~$e$ and the two edges crossing~$e$
has two connected regions, called \emph{cells}, where one is bounded and the other one is unbounded. 
The difference between the two configurations is that in Pattern~\reffanii, 
one endpoint of~$e$ lies in the bounded cell, while in
Pattern~\reffaniii, both endpoints are contained there. 
In the new definition of \emph{fan-planarity}, both endpoints of~$e$ must lie in the unbounded cell. 

The new forbidden Pattern~\reffaniii poses a problem for the existing literature on fan-planarity. 
Namely, while some previous results still apply when forbidding Pattern~\reffaniii, other existing results, 
e.g., the bound of~$4n-12$ on the number of edges of $n$-vertex bipartite ``\emph{fan-planar}'' 
graphs~\cite{DBLP:conf/isaac/AngeliniB0PU18}, 
build on the lemmas of the original paper~\cite{DBLP:journals/corr/KaufmannU14} 
and may be affected by the recent changes to the definition.

Cheong et al.~\cite{DBLP:conf/gd/CheongPS22} introduced a clear distinction of the two models 
with the notions of \emph{weak} and \emph{strong fan-planarity}. 
Namely, \emph{weak fan-planarity} allows  Pattern~\reffaniii whereas \emph{strong
fan-planarity} forbids it. 
The original definition of fan-planarity~\cite{DBLP:journals/corr/KaufmannU14} coincides 
with \emph{weak fan-planarity} whereas \emph{strong fan-planarity} matches 
the definition of the new journal version~\cite{DBLP:journals/combinatorics/0001U22}.
Graphs admitting such drawings are called \emph{weakly fan-planar} and \emph{strongly
fan-planar}, respectively. 

The family of graphs that admits drawings where only
Pattern~\reffani  is forbidden, called \emph{adjacency-crossing} graphs, has also been studied by    Brandenburg~\cite{DBLP:journals/tcs/Brandenburg20}. He showed that there are adjacency-crossing graphs that are \emph{not}
weakly fan-planar, so weakly fan-planar graphs form a proper subset of
the adjacency-crossing graphs. Moreover, he shows that for any
$n$-vertex adjacency-crossing graph with $m$~edges, one can construct
a weakly fan-planar graph with~$n$ vertices that also has $m$~edges.
Brandenburg concluded from this that an $n$-vertex adjacency-crossing
graph has at most~$5n-10$ edges, since that was the bound claimed
by~\cite{DBLP:journals/corr/KaufmannU14} for ``fan-planar'' graphs.  
Since that bound holds only
under strong fan-planarity, this conclusion contains a gap, which the
present paper fills.

\paragraph{Our contribution.}

First, we prove that the family of strongly fan-planar graphs is a proper subset of the weakly fan-planar graphs.
Together with Brandenburg's result, this implies that the two
inclusions of strongly fan-planar graphs inside weakly fan-planar graphs and
weakly fan-planar graphs inside
adjacency-crossing graphs are both proper.
We then continue to show that the known upper bound on the
edge-density of strongly fan-planar graphs (namely~$5n-10$ for an
$n$-vertex graph) carries over to weakly fan-planar graphs.  This
implies that also Brandenburg's
bound~\cite{DBLP:journals/tcs/Brandenburg20} is in fact correct.
We also prove that the known upper bound of~$4n-12$ on the number of
edges of an  $n$-vertex  \emph{bipartite} strongly fan-planar graph
carries over to bipartite weakly fan-planar graphs.

\section{Not every weakly fan-planar graph is strongly fan-planar}

In this section, we will establish that strongly fan-planar graphs
form a proper subset of weakly fan-planar graphs by constructing a
graph~$G$ with a weakly fan-planar drawing~$\Gamma$,
where Pattern~\reffaniii cannot be avoided in~$\Gamma$.

In order to guarantee the existence of at least one Pattern~\reffaniii 
in any valid weakly fan-planar drawing of~$G$, we will use the following key idea.
We start with a planar graph with a unique embedding. 
We will then make every edge of this planar graph ``uncrossable'' 
by replacing it with a suitable gadget introduced by Binucci et al.~\cite{BINUCCI201576}. 
Afterwards, we insert into every face of the planar graph a small gadget graph 
(shown in Fig.~\ref{fig:forced_heart}),
which can only be drawn with a quadrangular outer face if we allow \reffaniii. Note that this gadget graph itself is in fact strongly fan-planar, as shown in Fig.~\ref{fig:h-alternative}, and, hence, does not serve itself as an example of a weakly but not strongly fan-planar graph.
In order to achieve our goal, we will leverage the following lemma:
\begin{lemma}[Binucci et al.~\cite{BINUCCI201576}] 
    \label{lem:uncrossed_spine}
    Let $\mathcal{P}$ be the planarization\footnote{In a \emph{planarization} $\cal P$ of a non-planar drawing of a graph $G$, each crossing is replaced with a dummy-vertex that subdivides both edges involved in the crossing. We call an edge of $\cal P$ that is not incident to any dummy-vertex a \emph{real edge of $G$}.} of any weakly fan-planar drawing of~$K_7$. 
    Then, between any pair of vertices of~$K_7$, 
    there exists a path in~$\mathcal{P}$ that contains no real edge of the~$K_7$.
\end{lemma}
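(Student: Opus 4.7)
The plan is to argue by contradiction: suppose there is a pair $u, v \in V(K_7)$ such that every $u$--$v$ path in $\mathcal{P}$ contains at least one real edge of the $K_7$. Equivalently, the set $R$ of uncrossed (real) edges of the $K_7$ forms an edge-cut in $\mathcal{P}$ separating $u$ from $v$, and deleting $R$ from $\mathcal{P}$ yields at least two components each containing at least one original vertex of the $K_7$.

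As a warm-up and necessary condition, I would first show that every vertex of $K_7$ has at least one crossed incident edge in the drawing. If some vertex $x$ had all six of its incident edges uncrossed, then those six edges would form a plane star, and the induced $K_6$ on the remaining vertices would have to be drawn in the single face of this star, with all six vertices on its outer boundary. But this amounts to an outer-fan-planar drawing of $K_6$, and $K_6$ has $15$ edges, well above the $3n - 5 = 13$ density bound for outer-fan-planar graphs on $n = 6$ vertices, a contradiction. In particular, the component of the real-edge-free subgraph of $\mathcal{P}$ containing any original vertex is nontrivial.

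Next, I would examine the global structure of the hypothesized cut. The set $R$ of real edges, viewed as a subgraph of $K_7$, is planar (since its edges participate in no crossings), so it has at most $3 \cdot 7 - 6 = 15$ edges and the remaining $\geq 6$ edges of $K_7$ are crossed. The edge-cut assumption partitions $V(K_7) = A \cup B$ with every $K_7$-edge joining $A$ to $B$ either lying in $R$ (i.e.\ drawn without crossings, contributing to the cut) or else crossed at a dummy vertex that is itself separated from $B$ by further real cut-edges. I would then combine this cut structure with the fan-planar forbidden Patterns \reffani and \reffanii to constrain how crossings on the $A$-$B$ edges can be organized: all edges crossing a given cut-edge form a fan with a common anchor, and all these fans together must route the $|A|\cdot|B|$ cross-pairs of edges through the ``barrier'' formed by the planar subgraph $R$ without creating forbidden adjacent-crossing configurations.

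The main obstacle will be this last step: turning the cut-structure into a tight combinatorial contradiction. Because the bound is specific to $K_7$, the argument almost certainly relies on exact numerical slack rather than a generic density count; one must carry out a case distinction over the possible sizes of $A$ and $B$ (essentially $(1,6), (2,5), (3,4)$), in each case exhibiting that $R$ cannot simultaneously be a planar cut in $\mathcal{P}$ and leave enough ``room'' for the required crossed edges to connect $A$ to $B$ under fan-planarity. Identifying the right structural invariant that closes all these cases uniformly, rather than drowning in subcases, is the crux. Once that invariant is established, the contradiction follows and the lemma is proved.
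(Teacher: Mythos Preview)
The paper does not prove this lemma at all: it is stated with attribution to Binucci et al.\ and invoked as a black box in the proof of Theorem~\ref{thm:strong-vs-weak}. There is therefore no in-paper proof to compare your proposal against.

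As for the proposal on its own merits: it is a plan, not a proof. The contrapositive setup (if no real-edge-free $u$--$v$ path exists, then the uncrossed edges form an edge-cut in the planarization) is correct, and the warm-up that no vertex can have all six incident edges uncrossed is reasonable, though the step from ``the star is uncrossed'' to ``the remaining $K_6$ is outer-fan-planar'' deserves more care (a star does not bound a region; you need to argue via the cyclic order of the leaves around $x$ and explain why all $K_6$-edges lie on one side). The genuine gap is precisely where you say it is: the case analysis over $(|A|,|B|)\in\{(1,6),(2,5),(3,4)\}$ is announced but not executed, and you explicitly flag that finding the invariant that closes all cases is ``the crux'' and still open. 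Until that is supplied, this is an outline of an approach, not a proof. If you want to fill it in, the place to look is the original Binucci et al.\ paper; otherwise, citing the result as the present paper does is entirely appropriate.
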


Moreover, we will use the following definition throughout the paper.
\begin{definition}
    Let $\Gamma$ be a weakly fan-planar drawing of a graph~$G$. $\Gamma$~is said to be \emph{minimal},
    if, among all weakly fan-planar drawings of $G$, 
    it contains the smallest possible number of triples of edges that form Pattern~\reffaniii.
\end{definition}

\begin{theorem}
  \label{thm:strong-vs-weak}
  There exists a weakly fan-planar graph that does not admit a
  strongly fan-planar drawing.
\end{theorem}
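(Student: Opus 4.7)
The plan is to exhibit an explicit graph $G$ and argue that \emph{every} weakly fan-planar drawing of $G$ is forced to contain Pattern~\reffaniii, while producing one concrete weakly fan-planar drawing as a witness that $G$ is weakly fan-planar. Following the blueprint sketched above, I would proceed in four steps.

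First (skeleton and gadget construction), I would fix a simple planar graph $H$ that has a unique planar embedding up to the choice of the outer face -- a triconnected planar graph suffices by Whitney's theorem, and any sufficiently small example (e.g., a planar graph all of whose faces are triangles) will do. I would then replace every edge of $H$ by a copy of the ``uncrossable'' $K_7$-based gadget of Binucci et al., whose key property is captured by Lemma~\ref{lem:uncrossed_spine}, and insert into each face of $H$ one copy of the heart gadget~$F$ of Fig.~\ref{fig:forced_heart}. Call the resulting graph $G$. To certify that $G$ is weakly fan-planar I would display one explicit drawing $\Gamma_0$ in which the skeleton $H$ is drawn as in its unique planar embedding, each edge-gadget is drawn inside a narrow tube around that edge, and each heart gadget is drawn as in Fig.~\ref{fig:forced_heart} using Pattern~\reffaniii.

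Second (rigidity of the skeleton), I would argue that in an arbitrary weakly fan-planar drawing of $G$ the edge-gadgets behave as uncrossable paths: by Lemma~\ref{lem:uncrossed_spine}, no real edge of a $K_7$-gadget can be crossed by an external edge without forcing a forbidden pattern, since any such crossing could be rerouted along a path of dummy edges inside the planarization. Combined with the uniqueness of the embedding of $H$, this forces the faces of $H$ to appear, in every weakly fan-planar drawing of $G$, as faces of the planarization of the gadget skeleton, and each heart gadget must therefore be drawn entirely inside the single face of $H$ it was assigned to, with its four ``anchor'' vertices on the boundary of that face.

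Third (the heart forces Pattern~\reffaniii), I would take a \emph{minimal} weakly fan-planar drawing $\Gamma$ of $G$ in the sense of the definition above, restrict to a single face of $H$ in $\Gamma$, and perform a case analysis on the rotation system of the heart gadget $F$ inside this face. The goal of the analysis is to show that forbidding Patterns~\reffani, \reffanii, and \reffaniii simultaneously, together with the prescribed outer four-cycle inherited from the face boundary, leaves no feasible drawing of $F$; hence Pattern~\reffaniii must occur. Minimality is used to rule out local redrawings that would swap one \reffaniii with another or eliminate it at the cost of introducing a different pattern. This case analysis is the step I expect to be the main obstacle: it is purely combinatorial but requires a careful enumeration of how the interior edges of $F$ can cross one another and the heart's boundary.

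Combining Steps~2 and~3, every weakly fan-planar drawing of $G$ contains at least one copy of Pattern~\reffaniii (one per face of $H$, in fact), so $G$ admits no strongly fan-planar drawing, while Step~1 certifies that $G$ itself is weakly fan-planar. This proves Theorem~\ref{thm:strong-vs-weak}.
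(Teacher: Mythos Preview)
Your blueprint matches the paper's strategy, but there are two concrete gaps that would keep the argument from going through as written.

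First, your skeleton is underspecified and internally inconsistent: you propose a triconnected triangulation (``all of whose faces are triangles'') but then speak of ``the prescribed outer four-cycle inherited from the face boundary.'' The heart gadget~$F$ of Fig.~\ref{fig:forced_heart} has a $4$-cycle as its outer boundary, so the skeleton must be a $3$-connected planar \emph{quadrangulation}, as the paper uses. With triangular faces you simply cannot attach the gadget as described.

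Second, and more importantly, you only replace the edges of the skeleton~$H$ by $K_7$'s. The paper does more: it colors the edges of the gadget~$F$ (red/black/blue) and replaces \emph{all red edges of $G_1$}---including several edges \emph{internal} to each copy of~$F$---by $K_7$'s. This is what makes the final case analysis short. Once the red subgraph~$H'$ of the gadget is forced to be drawn planarly (because red edges are uncrossed) and the black edges are seen not to cross~$H'$, the embedding of Fig.~\ref{fig:forced_heart} is essentially unique, and the three blue edges are then forced into Pattern~\reffaniii. Without the uncrossability of the internal red edges, your Step~3 becomes a genuine rotation-system enumeration over all ways to draw~$F$ inside a quadrilateral, which you yourself flag as the main obstacle; the paper avoids this obstacle rather than confronting it.

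A smaller point: your use of minimality and of Lemma~\ref{lem:uncrossed_spine} is off. The lemma gives a path~$S$ in the planarization of the $K_7$ that uses \emph{no real} edge; Pattern~\reffani then forbids any edge not incident to a $K_7$-vertex from crossing~$S$. Edges incident to~$a$ or~$b$ that do cross~$S$ are handled by a separate rerouting, and this rerouting is justified not by minimality in the sense of the Definition but by additionally choosing, among all minimal drawings, one with the \emph{fewest crossings}. You will need this secondary optimality criterion explicitly in Step~2.
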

\begin{proof}
Let $G_0$ be a $3$-connected planar quadrangulation; e.g, one that is obtained by the construction in~\cite{DBLP:journals/dm/BrinkmannGGMTW05}. 
Note that by construction, $G_0$ is bipartite and has a unique embedding into~$\mathbb{R}^2$ up to the choice of the outer face and a mirroring~\cite{whitney1992congruent}. 
Next, we insert a copy of our gadget graph~$H$ shown in Fig.~\ref{fig:forced_heart} 
into every face~$f$ of~$G_0$ by identifying the outer cycle of~$H$ with the facial cycle of~$f$. 
Denote by~$G_1$ this supergraph of~$G_0$. 
We use the color scheme of Fig.~\ref{fig:forced_heart} to color all edges of $G_1$ -- in 
particular, the edges of $G_0$ form a subset of the red edges of~$G_1$. 
In the next step, we substitute every \emph{red} edge of~$G_1$ by a~$K_7$ 
and denote the resulting graph by~$G$. 
We claim that~$G$ is weakly fan-planar, but not strongly fan-planar.

For the first statement, 
observe that~$K_7$ admits a weakly fan-planar drawing, see Fig.~\ref{fig:k7_fan},
and that our gadget graph~$H$ has a weakly fan-planar drawing shown in Fig.~\ref{fig:forced_heart}. Combining both, we obtain a weakly fan-planar drawing of~$G$. 

Consider now the second statement. 
Let $\Gamma$ be a weakly fan-planar drawing of~$G$ that is minimal 
and that has the smallest number of crossings among all minimal weakly fan-planar drawings of~$G$.
We will prove that $\Gamma$ contains at least one Pattern~\reffaniii, 
which implies by our choice of $\Gamma$ that \emph{every} weakly fan-planar drawing of~$G$ 
requires at least one Pattern~\reffaniii.

Consider a red edge $ab \in G_1$ and denote by $a = v_1,\dots,v_7 = b$ the vertices of the~$K_7$ 
which substitute $ab$ in~$G$. 
By Lemma~\ref{lem:uncrossed_spine}, in the drawing~$\Gamma$ of this~$K_7$,
there exists a sequence~$S$ of crossed edges from~$a$ to~$b$; see Fig.~\ref{fig:k7_fan}.
By~\reffani, no edge which is not incident to one of $v_1,\dots,v_7$ can intersect~$S$. 
By construction, the only edges incident to vertices $v_2,\dots,v_6$ are edges of the $K_7$.
Hence, the only edges that can potentially cross~$S$ and interact with the remainder of~$G$ 
are incident to either $a = v_1$ or $b = v_7$. 
Suppose for a contradiction that there exists an edge incident to~$a$ or~$b$ 
that crosses~$S$ in~$\Gamma$ such that its other endpoint is not one of the vertices of the~$K_7$. 
But then we can easily reroute the edge such that its crossing with~$S$ is avoided, 
see Fig.~\ref{fig:rerouting_k7}, a contradiction to our choice of~$\Gamma$.
\begin{figure}[t]
	\begin{subfigure}[b]{.48\textwidth}
		\centerline{\includegraphics[scale=1,page=1]{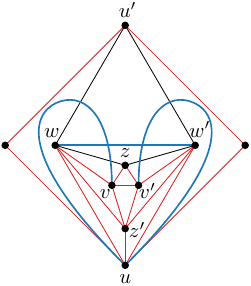}}
		\subcaption{}
		\label{fig:forced_heart}
	\end{subfigure}
	\hfill
        \begin{subfigure}[b]{.48\textwidth}
		\centerline{\includegraphics[scale=1,page=4]{forced_heart}}
		\subcaption{}
		\label{fig:h-alternative}
	\end{subfigure}
	\hfill
	\begin{subfigure}[b]{.48\textwidth}
		\centerline{\includegraphics[scale=1,page=3]{forced_heart}}
		\subcaption{}
		\label{fig:k7_fan}
	\end{subfigure}
	\hfill
    \begin{subfigure}[b]{.48\textwidth}
	   \centerline{\includegraphics[scale=1,page=2]{forced_heart}}
	   \subcaption{}
	   \label{fig:rerouting_k7}
    \end{subfigure}
	\caption{(a) Gadget graph $H$. 
(b) Gadget graph $H$ with $(z,v,v')$ chosen as the outer face. Note that there is no pattern \reffaniii.
(c) A planarization of a fan-planar drawing of $K_7$, where the bold edges form a path from $a$ to $b$ that contains no \emph{uncrossed edge} of the $K_7$.
 (d) An edge incident to $a$ that crosses the~$K_7$ to avoid this crossing.}
	\label{fig:mix}
\end{figure}

We interpret $\Gamma$ as a drawing $\Gamma'$ of~$G_1$, 
where the red edges are \emph{uncrossed}.
Since~$G_0$ has a unique planar embedding into~$\mathbb{R}^2$ up to the choice of the outer face and a mirroring and since $G_0$ consists only of  the red edges, in $\Gamma'$, $G_0$ is drawn as a planar graph, all faces of which are quadrilaterals.
Since the red edges are uncrossed, each quadrilateral face must contain a copy of our gadget~$H$.
Indeed, since each vertex of~$H$ is connected by a path to both~$u$ and~$u'$, 
it must lie in a face that contains both~$u$ and~$u'$.
But by $3$-connectivity of~$G_0$, this face is unique.

Let $f$ be a bounded quadrilateral face of~$G_0$. As argued above, $f$ contains a copy of~$H$ in its interior in~$\Gamma'$, i.e. vertices $v,v',w,w',z$ and $z'$, refer to Fig.~\ref{fig:forced_heart}, 
lie in the interior of~$f$.
Consider the subgraph~$H'$ of this~$H$ consisting of its red edges.
Since the red edges are uncrossed, $H'$ is drawn without crossings in~$\Gamma$.
Since the black edges do not cross~$H'$, a small case analysis shows 
that the vertices~$u'$, $w$, $z$, and~$w'$ must lie in the same face of~$H'$,
and in fact the embedding of Fig.~\ref{fig:forced_heart} is unique up to symmetry
with respect to the vertical axis.
Thus, the blue edges $(u,v)$, $(u,v')$ and $(w,w')$ can
only be drawn in the indicated way,
but that means that the blue edges form~\reffaniii,
which concludes the proof.
\qed\end{proof}

\section{Density of weakly fan-planar graphs}
\label{sec:density-weak-fp}
In this section, we show that the density results for strongly fan-planar graphs also transfer to the weakly fan-planar setting. 
Let us call a triple $e, e_\ell, e_r$ of edges in a weakly fan-planar drawing 
a \emph{heart} if~$e_\ell$ and~$e_r$ share an endpoint~$u$, 
both cross~$e$ such that they form Pattern~\reffaniii,
and the part of~$e$ between the crossings with~$e_\ell$ and~$e_r$
is not crossed by any edge of the graph, see Fig.\ref{fig:heart}.
In the remainder of the paper, we call the intersection points of $e$ and $e_\ell$ ($e_r$, resp.) $x_\ell$ ($x_r$, resp.).

\begin{figure}[t]
\centering
\includegraphics[page=10]{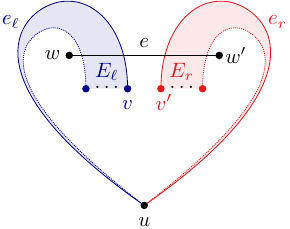}
\caption{A heart.}
\label{fig:heart}
\end{figure}

\begin{lemma}\label{lem:heart}
Let~$\Gamma$ be a weakly fan-planar drawing that is not strongly fan-planar. Then~$\Gamma$ contains a heart $\mathcal{H}$.
\end{lemma}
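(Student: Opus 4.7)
My plan is to pick a Pattern~\reffaniii triple $(e, e_\ell, e_r)$ in $\Gamma$ minimizing the number of crossings on the part of $e$ strictly between $x_\ell$ and $x_r$, and show this minimum must be zero --- which by definition makes $(e, e_\ell, e_r)$ a heart.

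Suppose for contradiction that some edge $e'$ crosses this arc at a point $x'$. Since Pattern~\reffani is forbidden in $\Gamma$, $e'$ is adjacent to both $e_\ell$ and $e_r$; and by the fan property implied by weak fan-planarity (all edges crossing $e$ share a common anchor), $e'$ is incident to $u$. I then consider the two 3-edge sub-configurations $(e, e_\ell, e')$ and $(e, e', e_r)$, each with a well-defined bounded cell in its own planarization.

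Next, I would case-analyze the distribution of the endpoints $p, q$ of $e$ among these sub-bounded-cells. Since $p, q$ both lie in the original bounded cell of $(e, e_\ell, e_r)$, a topological analysis --- distinguishing whether the arc of $e'$ from $u$ to $x'$ is routed inside the original bounded cell (Case 1) or wraps around it through the unbounded cell (Case 2) --- shows that one of the following always holds. Either (a) both $p$ and $q$ fall in the same sub-triple's bounded cell, making that sub-triple a Pattern~\reffaniii configuration whose middle arc (running from $x_\ell$ to $x'$ or from $x'$ to $x_r$) no longer contains $x'$ as an interior crossing and therefore has strictly fewer crossings than the original middle arc, contradicting minimality; or (b) $p$ and $q$ are separated between the two sub-triples' bounded cells (possible only in Case 1), in which case one of the sub-triples contains exactly one endpoint of $e$ in its bounded cell, forming Pattern~\reffanii and contradicting the weak fan-planarity of $\Gamma$.

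The main obstacle will be the topological case analysis, particularly Case 2: when $e'$ wraps outside the original bounded cell, one must verify that exactly one of the two sub-triples' boundary curves encloses the original bounded cell (forcing both $p, q$ into that sub-triple's bounded cell and yielding option (a)), while the other sub-triple's bounded cell is a ``pocket'' region outside that contains neither endpoint. Confirming that no ``third possibility'' --- where neither sub-triple's bounded cell captures $p$ or $q$ --- can slip through is the most delicate ingredient of the argument.
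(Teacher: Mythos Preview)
Your approach is correct and will work, but it differs from the paper's proof, which is more direct. The paper fixes one Pattern~\reffaniii triple $(e, e_\ell, e_r)$, considers the set $E'$ of \emph{all} edges crossing $e$ (each incident to $u$ and crossing from the same side, by \reffani and \reffanii), observes that every $e' \in E'$ must form Pattern~\reffaniii with $e$ and one of $e_\ell, e_r$, partitions $E'$ accordingly into $E_\ell$ and $E_r$, notes that along $e$ the crossings of $E_\ell$ precede those of $E_r$, and takes the innermost pair (last of $E_\ell$, first of $E_r$) to obtain the heart directly. This global ordering argument sidesteps your explicit inside/outside case analysis entirely.

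Your minimal-counterexample framing is equally valid but trades that slickness for the topological verification you flag. One small simplification you can make: in your Case~1 (the arc of $e'$ from $u$ to $x'$ lies inside the original bounded cell), option~(a) in fact cannot occur. The chord splits the bounded cell into a left piece $B_L$ and a right piece $B_R$; the dangling arc of $e$ from $x_\ell$ to its endpoint enters the bounded cell at $x_\ell$, which lies only on $\partial B_L$, and this arc cannot re-cross $e_\ell$, $e_r$, $e$, or $e'$, so that endpoint is forced into $B_L$. Symmetrically the other endpoint lies in $B_R$. Hence Case~1 always produces Pattern~\reffanii. In Case~2 the arc of $e'$ wraps around outside; since $e'$ cannot cross $e_\ell$ or $e_r$, it passes around exactly one of them, and a Jordan-curve check shows exactly one sub-triple's bounded cell then contains the entire original bounded cell (hence both endpoints of $e$), yielding Pattern~\reffaniii on a strictly shorter middle arc. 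So your feared ``third possibility'' indeed does not arise, and the argument closes.
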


\begin{proof}
By assumption, $\Gamma$ contains three edges~$e, e_\ell, e_r$ that form Pattern~\reffaniii, where~$e_\ell$ and~$e_r$ share endpoint~$u$ and cross~$e$. Let~$E'$ be the set of edges that cross~$e$.
By~\reffani and~\reffanii, any edge~$e' \in E'$ must be incident to~$u$, and by~\reffanii it must cross~$e$ from the same side as~$e_\ell$ and~$e_r$. The edges of~$E'$ cannot cross each other since they share an endpoint, and each edge~$e' \in E'$ forms Pattern~\reffaniii either with~$e$ and~$e_r$, or with~$e$ and~$e_\ell$.
Let $E_\ell \subset E'$ be the set of edges of the first kind, $E_r = E' \setminus E_\ell$ the second kind.
If we order~$E'$ by their crossing point with~$e$ along~$e$, 
then we first encounter all elements of~$E_\ell$, then all elements of~$E_r$. 
The last element of~$E_\ell$ and the first element of~$E_r$ form a heart with~$e$.
\qed\end{proof}
We will call the sets $E_\ell$ and $E_r$ as defined in the previous proof the 
\emph{left valve} and the \emph{right valve} of the heart~$\mathcal{H}=e, e_\ell, e_r$, respectively.  
We denote by $H$ the edge set containing both valves of $\mathcal{H}$ and the edge $e$, 
namely  $H = E_\ell \cup E_r \cup {e}$.
In the following, we will define an edge-rerouting operation 
that will later allow us to reduce the number of hearts in a weakly fan-planar drawing under certain conditions.

\paragraph{Flipping the valve of a heart.}
Consider a heart $\mathcal{H}$ formed by the edges $e,e_\ell,e_r$ in a weakly fan-planar drawing $\Gamma$; refer to Fig.~\ref{fig:rerouting-pre} for a visualization. 
In the following, we will define an operation that we call \emph{flipping} a valve of $\mathcal{H}$ resulting in the drawing $\Gamma'$. 
We describe the flip of $E_\ell$, as the other case is symmetric. 
The general idea is to redraw the edges in $E_\ell$ ``close'' to the ones in the other valve $E_r$, 
in particular, mainly following the curve of $e_r$.

\begin{figure}[t]
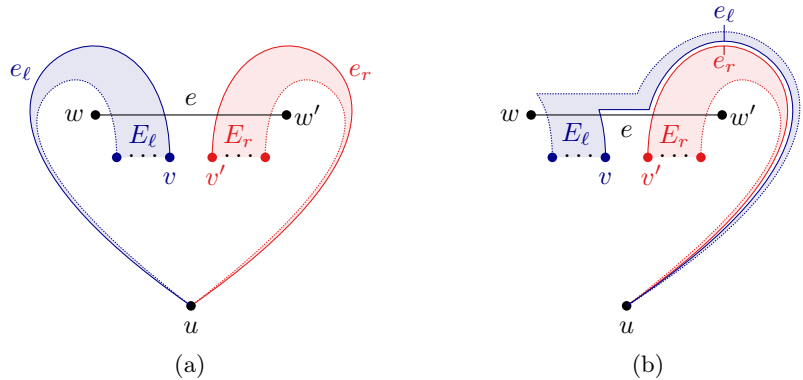

\centering
\begin{subfigure}[b]{.48\textwidth}
\centering
\includegraphics[scale=1,page=1]{figures/double-heart}
\subcaption{}
\label{fig:rerouting-pre}
\end{subfigure}
\hfil
\begin{subfigure}[b]{.48\textwidth}
\centering
\includegraphics[scale=1,page=2]{figures/double-heart}
\subcaption{}
\label{fig:rerouting-result}
\end{subfigure}
\caption{(a) Illustration of the setting previous to the flip-operation. (b) Transformation from $\Gamma$ to $\Gamma'$ by flipping $E_\ell$.}
\label{fig:rerouting}
\end{figure}

Let $e^\ell_1,e^\ell_2,\dots e^\ell_k$ be the edges of $E_\ell$ in the order that they intersect edge $e$ in $\Gamma$ starting at $w$, i.e. $e^\ell_k=e_\ell$. We will draw the curve $\gamma^i$ of $e^\ell_i$ in three parts, denoted as $\gamma^i_1, \gamma^i_2, \gamma^i_3$. We consider the edges in reverse order and start with $e^\ell_k = e_\ell$. The first part $\gamma^k_1$ of the curve of $e^\ell_k$ in $\Gamma'$ follows the curve of $e_r$ slightly outside until $x_r$, then $\gamma^k_2$ follows $e$ until $x_\ell$, where the curve intersects $e$ and afterwards it inherits its original curve in $\Gamma$ as the last part $\gamma^k_3$. So, assume that we have drawn $e^\ell_i$ with $2<i\leq k$. The curve of $e^\ell_{i-1}$ follows the curve of $e^\ell_i$ (slightly outside) until $x_r$, where it follows $e$ until the intersection point of $e^l_{i-1}$ with $e$ in $\Gamma$. Here, the curve intersects $e$ and then again inherits its original curve in $\Gamma$ until it reaches its endpoint different from $u$.
After this operation, the edges of $E_\ell \cup E_r$ do not cross by simplicity and $e$ does not form Pattern \reffani to \reffaniii with $E_\ell \cup E_r$, thus we can make the following observation, illustrated in Fig.~\ref{fig:rerouting-result}.
\begin{obs} \label{obs:flip}
Let $\Gamma'$ be the drawing obtained from flipping a valve of a heart $\mathcal{H}$. Then, $\Gamma'[H]$ is a strongly fan-planar drawing.
\end{obs}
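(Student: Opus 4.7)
The plan is to verify that $\Gamma'[H]$ avoids each of the three forbidden patterns~\reffani, \reffanii, and~\reffaniii. First, I would pin down the crossing structure of $\Gamma'[H]$. By the rerouting recipe, every edge of $E_\ell$ is redrawn as a simple curve that first hugs the outer side of $e_r$ (or of a previously rerouted edge), then travels along $e$, and finally inherits its original tail; as noted immediately before the observation, the edges of $E_\ell \cup E_r$ are pairwise non-crossing in $\Gamma'$. Moreover, each $e^\ell_i$ intersects $e$ exactly once in $\Gamma'$, at its original crossing point $x^\ell_i$, since the two portions following $e_r$ and $e$ lie on a single side of $e$ and only the transition across $e$ at $x^\ell_i$ contributes a crossing. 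Because the edges of $E_r$ are unchanged, every edge in $E_\ell \cup E_r$ still crosses $e$ exactly once, so all crossings in $\Gamma'[H]$ are between $e$ and edges of $E_\ell \cup E_r$.

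Pattern~\reffani is then immediate: the only crossed edge is $e$, and every edge crossing it is incident to the common vertex $u$, so any two such edges are adjacent. For Pattern~\reffanii, I would observe that $u$ is a single point lying on a fixed side of $e$; after the flip, every edge of $E_\ell \cup E_r$ leaves $u$ along the outer side of $e_r$, reaches $e$ only at its unique crossing point, and continues on the opposite side. Hence the common endpoint of any two such edges always lies on the same side of $e$, ruling out Pattern~\reffanii.

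The main step is handling Pattern~\reffaniii. I would take an arbitrary triple $(e, e', e'')$ with $e', e'' \in E_\ell \cup E_r$ and analyze the two cells of the drawing $e \cup e' \cup e''$ in $\Gamma'$. Because both $e'$ and $e''$ now travel from $u$ along the outer side of $e_r$ before peeling off towards their crossings with $e$, the bounded cell of the triple is a thin region running along $e_r$ on its outer side and then along the subsegment of $e$ between the two crossings. By the heart hypothesis, however, the endpoints of $e$ lie in the original bounded cell of $(e, e_\ell, e_r)$, which sits on the opposite (inner) side of $e_r$; hence both endpoints of $e$ are contained in the unbounded cell of $(e, e', e'')$ in $\Gamma'$, and Pattern~\reffaniii is avoided. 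Combining the three cases yields the observation.

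The principal obstacle is the geometric bookkeeping in the last step: one must identify the two cells of $e \cup e' \cup e''$ in $\Gamma'$ precisely and argue that both endpoints of $e$ have been expelled from the bounded one, making careful use of the fact that the rerouting confines every flipped curve to a narrow neighborhood of $e_r$ (or of a previously flipped edge) before its transition across $e$. Once this is established, the remaining patterns follow easily from the crossing inventory of $\Gamma'[H]$.
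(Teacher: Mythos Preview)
Your proposal is considerably more detailed than the paper, which states this as an observation justified only by the single sentence ``the edges of $E_\ell \cup E_r$ do not cross by simplicity and $e$ does not form Pattern~\reffani{} to~\reffaniii{} with $E_\ell \cup E_r$'' together with a figure. Your treatment of the crossing inventory and of Patterns~\reffani{} and~\reffanii{} is correct and matches the paper's intent.

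For Pattern~\reffaniii{}, though, your central claim that ``both $e'$ and $e''$ now travel from $u$ along the outer side of $e_r$'' is not accurate when $e'$ or $e''$ lies in~$E_r$. The unflipped edges $e^r_j \in E_r$ sit on the \emph{inner} side of~$e_r$ (nested between $e_r$ and~$w'$, inside the heart region~$\mathcal{L}$), not on the outer side; only the flipped edges of~$E_\ell$ are rerouted to the outer side. Consequently your description of the bounded cell as ``a thin region running along $e_r$ on its outer side'' applies only to the sub-case $e',e'' \in E_\ell$. The remaining cases are not hard to patch: two edges of~$E_r$ are unchanged from~$\Gamma$ and, being in the same valve, wind around~$w'$ in the same sense and around~$w$ not at all, so the closed curve they determine encloses neither endpoint; a flipped edge together with an edge of~$E_r$ bounds a sliver that contains the arc of~$e_r$ between them but again neither~$w$ nor~$w'$, for the same winding reason. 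With this small correction your argument goes through and is in line with what the paper asserts.
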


\noindent Moreover, in the entire drawing $\Gamma'$ (not limited to $H$) resulting from a flip, 
new crossings can arise only in a restricted part of the flipped edges.

\begin{lemma} \label{lem:crossing-first-part}
Let $\mathcal{H}$  be a heart formed by edges $e, e_\ell, e_r$ in $\Gamma$ and $E_\ell = \{e^\ell_1,e^\ell_2, \dots e^\ell_k\}$ be the edges of a flipped valve of $\mathcal{H}$ in $\Gamma'$. Then any crossing introduced by the flipping operation occurs on the partial curves $\gamma^1_1, \gamma^2_1, \dots \gamma^k_1$.
\end{lemma}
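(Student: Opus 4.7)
The plan is to show that every crossing present in $\Gamma'$ but absent from $\Gamma$ involves at least one piece $\gamma^j_1$, by ruling out new crossings on the pieces $\gamma^i_2$ and $\gamma^i_3$. Since only the edges in $E_\ell$ have been redrawn, any new crossing must involve at least one piece $\gamma^j_*$, so it is enough to handle these two cases.

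First I would analyze the tails $\gamma^i_3$. By construction $\gamma^i_3$ coincides with the portion of $e^\ell_i$ from $x^\ell_i$ to its non-$u$ endpoint in $\Gamma$, so any crossing between $\gamma^i_3$ and an unchanged curve is already present in $\Gamma$. For a crossing between two tails $\gamma^i_3$ and $\gamma^j_3$ I would invoke simplicity of $\Gamma$: $e^\ell_i$ and $e^\ell_j$ share the endpoint $u$, hence do not cross in $\Gamma$, and their tails are unchanged. For a crossing of $\gamma^i_3$ with a piece $\gamma^j_2$ I would argue by local separation by $e$: the tail $\gamma^i_3$ starts at $x^\ell_i$ on the side of $e$ opposite to $u$, and since $e^\ell_i$ meets $e$ only at $x^\ell_i$, the curve $\gamma^i_3$ never re-enters the thin strip on the $u$-side of $e$ where $\gamma^j_2$ lies.

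Second I would analyze the pieces $\gamma^i_2$. By construction $\gamma^i_2$ hugs the sub-arc of $e$ between $x_r$ and $x^\ell_i$ on the $u$-side, so any curve crossing $\gamma^i_2$ must cross this sub-arc of $e$ from the $u$-side. Patterns \reffani{} and \reffanii{} imply that every edge crossing $e$ in $\Gamma$ is incident to $u$, and the heart property says that the sub-arc between $x_\ell$ and $x_r$ is uncrossed; together they restrict the candidates to the edges $e^\ell_{i+1},\dots,e^\ell_k$. In $\Gamma'$ these edges consist of pieces $\gamma^j_3$, which leave $e$ immediately to the far side; pieces $\gamma^j_2$, which the construction draws nested at strictly different distances from $e$, hence pairwise disjoint; and pieces $\gamma^j_1$, which hug $e_r$ on the outside of the heart and thus never enter the strip on the $u$-side of $e$. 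No curve of any of these families meets $\gamma^i_2$.

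The main obstacle will be making the informal ``slightly outside / slightly further from $e$'' recipe of the construction precise enough to guarantee the nested disjointness statements used above; once this bookkeeping is settled, the heart property together with the fan-planarity constraints on edges crossing $e$ already invoked in Lemma~\ref{lem:heart} reduce both steps to routine local-separation arguments.
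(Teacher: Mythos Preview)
Your plan is correct and follows essentially the same approach as the paper. The paper's proof is much terser---it simply notes that $\gamma^i_3$ is inherited from $\Gamma$ (hence carries no new crossings) and that $\gamma^i_2$ is crossing-free because the $x_\ell$--$x_r$ segment of $e$ is uncrossed by the heart property while the reverse-order construction keeps the flipped edges from intersecting one another---but your more explicit case analysis (separating $\gamma^i_3$ from $\gamma^j_2$ by the edge $e$, and reducing candidates for crossings on $\gamma^i_2$ to the already-redrawn $E_\ell$ edges via Patterns~\reffani/\reffanii) unpacks exactly the same two observations.
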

\begin{proof}
Let us consider a flipped edge $e^\ell_i \in E_\ell$ and its curve $\gamma^i$ in $\Gamma'$. Clearly, any additional crossing that we introduce may only occur on the first part $\gamma^i_1$ or second part $\gamma^i_2$ of $\gamma^i$, as the last part $\gamma^i_3$ is inherited from $\Gamma$. By construction, $\gamma^i_2$ is crossing free, as the segment of $e$ between $x_\ell$ and $x_r$ in $\Gamma$ is crossing-free since $e_\ell,e_r$ and $e$ form a heart and by considering the edges in the reverse order that they intersect $e$ (starting at $w$), they do not intersect each other.
\qed\end{proof}

\noindent For our later proofs of Theorems~\ref{thm:density}~and~\ref{thm:density-bipartite} on the edge density of  (bipartite) weakly fan-planar drawings, we need to show that a certain configuration, as described in the following lemma, cannot occur in a \emph{minimal} weakly fan-planar drawing.

\begin{lemma} \label{lem:no-two-edges}
Let $e, e_\ell, e_r$ be a heart in a minimal weakly fan-planar drawing~$\Gamma$. Then there is no edge~$e' \neq e$ in~$\Gamma$ that crosses both~$e_\ell$ and~$e_r$.
\end{lemma}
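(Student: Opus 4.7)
The plan is to prove the lemma by contradiction, combining the minimality of~$\Gamma$ with the flip operation.

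Suppose such an edge $e' \neq e$ crosses both $e_\ell$ and $e_r$. Because $e$ and $e'$ both cross $e_\ell$, pattern~\reffani forces them to share an endpoint; the same reasoning applied to $e_r$ yields another shared endpoint. Since $e \neq e'$ and the graph is simple, these two forced shared endpoints must coincide at a single vertex~$v$, which is thus a common endpoint of $e$ and $e'$. By pattern~\reffaniii, both endpoints of~$e$---and hence~$v$---lie in the bounded cell of the heart. A short adjacency argument (using simplicity of the drawing) further shows that $e'$ does not cross~$e$ itself.

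Next, I would apply the flip operation to the valve~$E_\ell$, producing a drawing~$\Gamma'$. By Observation~\ref{obs:flip}, $\Gamma'[H]$ is strongly fan-planar, so in particular the heart $(e,e_\ell,e_r)$ is destroyed. My aim would then be to show that $\Gamma'$ is a weakly fan-planar drawing of~$G$ with strictly fewer hearts than~$\Gamma$, contradicting minimality. For weak fan-planarity, Lemma~\ref{lem:crossing-first-part} gives that new crossings appear only on the first parts~$\gamma^i_1$ of the flipped edges, which run parallel to~$e_r$ between~$u$ and~$x_r$. By pattern~\reffani in $\Gamma$, any edge crossing~$e_r$ on this segment shares endpoint~$v$ with~$e$; hence in $\Gamma'$ every flipped edge is crossed only by edges incident to~$v$, so that patterns~\reffani and~\reffanii continue to hold locally at each flipped edge.

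The delicate step is ruling out new patterns~\reffanii and~\reffaniii in $\Gamma'$ arising from triples that contain a flipped edge. For any such candidate triple $(e_0, e^\ell_i, e^\ell_j)$ or $(e_0, e^\ell_i, e_r)$, pattern~\reffani forces $e_0$ to have~$v$ as an endpoint. The bounded cell of the triple is contained in the thin region along~$e_r$ carved out by the flipped curves, on the ``outer'' side of~$e_r$. By drawing the~$\gamma^i_1$'s sufficiently close to~$e_r$, this region can be made to avoid all vertices of~$G$; in particular~$v$, which lies in the heart's bounded cell on the opposite (``inner'') side of~$e_r$, is not in the thin region. Thus neither endpoint of~$e_0$ can lie in the bounded cell of the triple, ruling out both~\reffanii and~\reffaniii. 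Combined with the destruction of~$(e,e_\ell,e_r)$, this yields a weakly fan-planar drawing with strictly fewer hearts than~$\Gamma$, contradicting minimality. The main obstacle is precisely this topological analysis: carefully justifying that~$v$ lies on the inner side of~$e_r$ away from the flipped curves, and that the other endpoint of~$e_0$ can likewise be excluded from the thin strip.
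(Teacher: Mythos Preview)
Your strategy---flip the left valve and argue that the resulting drawing has strictly fewer occurrences of Pattern~\reffaniii---is the paper's strategy, but the ``delicate step'' has a real gap.

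You only analyse triples in which a flipped edge acts as one of the two \emph{crossing} edges of the pattern; there the bounded cell indeed lies in the thin strip between curves running parallel to~$e_r$, and your argument goes through. You omit the case where a flipped edge $e^\ell_i$ is itself the \emph{crossed} edge and two edges $e_1,e_2$ incident to your vertex~$v$ (the paper's~$w$) cross it to form Pattern~\reffaniii. In that case the bounded cell is delimited by $e_1$ and~$e_2$, not by the parallel flipped curves, so there is no reason for it to be thin or to avoid~$u$. This situation actually occurs: the hypothesised edge~$e'$ already crosses~$e_r$, and if $e_r,e,e'$ themselves form Pattern~\reffaniii (what the paper calls a \emph{double heart}), then after the flip every $e^\ell_i$ inherits a new Pattern~\reffaniii with edges of this second heart. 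The paper resolves this by a case distinction (single heart versus double heart) and, in the double-heart case, performs a \emph{second} flip on a valve of the second heart. Note also that if your single-flip argument were valid without any further use of~$e'$, it would eliminate \emph{every} heart, contradicting Theorem~\ref{thm:strong-vs-weak}.

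A secondary omission: to rule out Pattern~\reffani after the flip you need every edge of~$E_\ell$ (not just~$e_\ell$) to already have~$v$ as its anchor in~$\Gamma$, so that inherited crossings on~$\gamma^i_3$ are compatible with the new ones on~$\gamma^i_1$. This is not automatic for an edge of~$E_\ell$ that is crossed only by~$e$ in~$\Gamma$; the paper supplies a separate topological argument for it, and that argument genuinely uses the presence of an edge like~$e'$.
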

\begin{proof}
Assume for a contradiction that there exists an edge~$e' \neq e$ that intersects both~$e_\ell$ and~$e_r$. By~\reffani, $e$ and~$e'$ share an endpoint, say~$w$, see Fig.~\ref{fig:e-single-heart}. This implies by~\reffani and~\reffanii that \emph{every} edge which crosses~$e_\ell$ or~$e_r$ is incident to~$w$. W.l.o.g. assume that $x_\ell$ is encountered before $x_r$ when traversing $e$ starting at $w$.
First, notice that $e$, $e'$ and $e_r$ can in turn form \reffaniii themselves, refer to Fig.~\ref{fig:e-double-heart}. 

\begin{figure}[t]
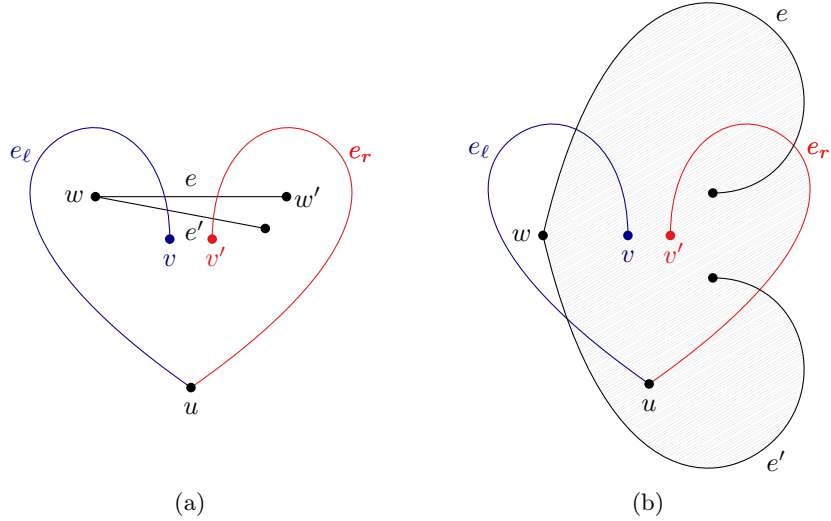

\centering
\begin{subfigure}[b]{.48\textwidth}
\centering
\includegraphics[scale=1,page=3]{figures/double-heart}
\subcaption{}
\label{fig:e-single-heart}
\end{subfigure}
\hfil
\begin{subfigure}[b]{.48\textwidth}
\centering
\includegraphics[scale=1,page=4]{figures/double-heart}
\subcaption{}
\label{fig:e-double-heart}
\end{subfigure}
\caption{(a) A single heart $\mathcal{H}=e,e_\ell,e_r$. (b) A double heart formed by $\mathcal{H}=e,e_\ell,e_r$ and $\mathcal{H}'=e_r,e,e'$, the area bounded by $\mathcal{H}'$ is indicated in gray. }
\end{figure}

Based on this observation, we will define four sets of edges that will be helpful in the remainder. In particular, we construct these sets based on their drawing in $\Gamma$---while some of the edges may be redrawn at a later time, they will always belong to their corresponding sets. 
Let $E_\ell$ and $E_r$ be the sets of edges that correspond to the left valve and the right valve of the heart $\mathcal{H}$ induced by $e,e_\ell,e_r$---in particular, $e_\ell \in E_\ell$ and $e_r \in E_r$. 
Further, let $E_t$ be the set of edges that cross both $e_\ell$ and $e_r$ and do not form \reffaniii with $e_r$ and $e$ when traversing $e_\ell$  starting at $v$. Complementary, let $E_b$ be the set of edges that also cross $e_\ell$ and $e_r$ and are not contained in $E_t$. Further, let $e$ be the first edge in $E_t$ that is encountered when traversing $e_\ell$ ($e_r$) starting at $u$.
In the following, we will distinguish between two cases. First, if $E_b = \emptyset$, we call $\mathcal{H}$ a \emph{single heart}. 
Otherwise, that is when~$E_b \neq \emptyset$, 
we assume that~$e'$ is the last edge in~$E_b$ that is encountered when traversing~$e_\ell$ starting at~$u$
(and therefore also when traversing~$e_r$). 
We say that~$\mathcal{H}$ forms a \emph{double heart} with $\mathcal{H}'$ defined by the edges $e_r,e, e'$; see Fig.~\ref{fig:hearts-cases}.

\begin{figure}[t]
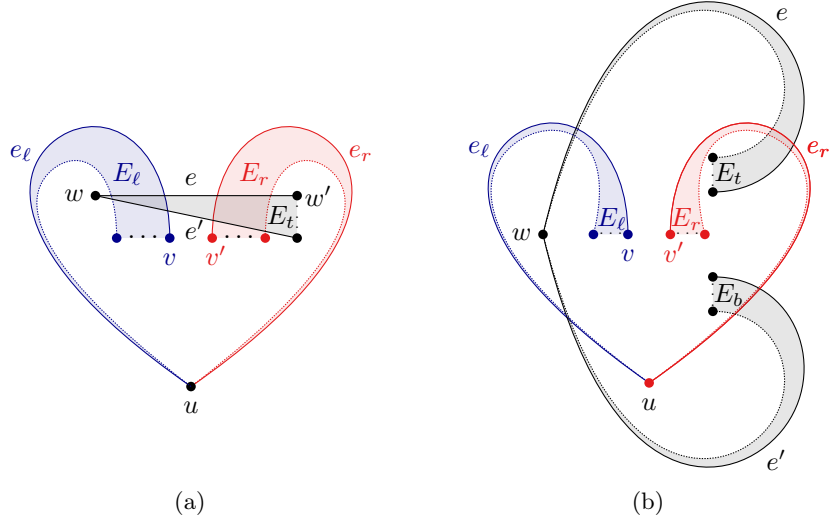

\centering
\begin{subfigure}[b]{.48\textwidth}
\centering
\includegraphics[scale=1,page=5]{figures/double-heart}
\subcaption{}
\label{fig:case-single-heart}
\end{subfigure}
\hfil
\begin{subfigure}[b]{.48\textwidth}
\centering
\includegraphics[scale=1,page=6]{figures/double-heart}
\subcaption{}
\label{fig:case-double-heart}
\end{subfigure}
\caption{Illustration of edge sets used in (a) a single heart and (b) a double heart.}
\label{fig:hearts-cases}
\end{figure}
The main idea is to apply the flip operation to reduce the number of hearts in the resulting (weakly fan-planar, as to be shown) drawing $\Gamma'$ to obtain a contradiction to the minimality of $\Gamma$. For a single heart, one flip will suffice while two flips will be necessary for a double heart. In both cases, we start by flipping the valve $E_\ell$ as previously defined to obtain the drawing $\Gamma'$. We first assert:

\paragraph{$\Gamma'$ is weakly fan-planar.}
By Observation~\ref{obs:flip}, any new edge triple forming Pattern \reffani or \reffanii in $\Gamma'$ must involve at least one edge in the flipped valve $E_\ell$ and at least one edge belonging to neither $E_\ell$ nor $E_r$. Consider a flipped edge $e^\ell_i \in E_\ell$. By Lemma~\ref{lem:crossing-first-part}, any new crossing on the curve $\gamma^i$ of $e^\ell_i$ may only occur on its first part $\gamma^i_1$ in $\Gamma'$. Any edge that intersects $\gamma^i_1$ in $\Gamma'$ also intersects $e_r$ and is therefore incident to $w$ by Pattern \reffani.
To show that we do not introduce new edge-triples forming Pattern \reffani, it remains to show that all edges in $E_\ell$ have vertex $w$ as their anchor, even if they only cross $e$ and no other edge in $E_t$. 

We consider two cases. If $\mathcal{H}$ forms a double heart with $\mathcal{H}'$, consider any edge $e_b \in E_b$. Since $e \in E_t$, we observe that each edge $e_\ell \in  E_\ell$ is crossed both by $e$ and $e_b$; see Fig.~\ref{fig:anchor-doubleheart}. Thus, its anchor is $w$. Otherwise, $\mathcal{H}$ is a single heart. Consider the region $\mathcal{R}$ defined by $w$, $x_\ell$ and the intersection of $e_\ell$ and another edge $e_t \in E_t$ in the original drawing $\Gamma$, see Fig.~\ref{fig:region-R}
for an illustration of this case.

\begin{figure}[t]
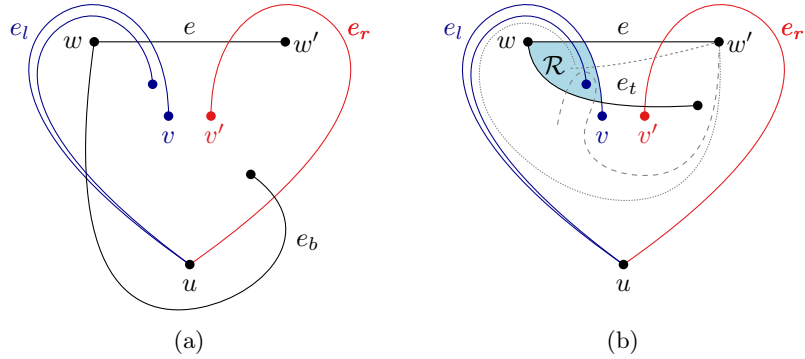

\centering
\begin{subfigure}{0.4\textwidth}
    \centering
\includegraphics[page=11]{figures/double-heart}
\caption{}
\label{fig:anchor-doubleheart}
\end{subfigure}
\hfil
\begin{subfigure}{0.4\textwidth}
    \centering
\includegraphics[page=9]{figures/double-heart}
\caption{}
\label{fig:region-R}
\end{subfigure}
\caption{(a) Every edge in $E_\ell$ has anchor $w$ if $\mathcal{H}$ is part of a double heart. (b) Any edge of $E_\ell$ that enters region $\mathcal{R}$ has $w$ as its anchor, if $\mathcal{H}$ forms a single heart.}
\label{fig:anchors-in-lemma4}
\end{figure}
	
By definition, every edge in $E_\ell$ besides $e_\ell$ enters $\mathcal{R}$ over $e$. If such an edge is leaving $\mathcal{R}$, then it has to also cross $e_t$ by simplicity, but then its anchor is $w$ and by \reffani it can only be crossed by edges that are incident to $w$. Suppose now an edge of $E_\ell$ ends in $\mathcal{R}$, but its anchor is not $w$. Since the edge intersects $e$, its anchor is therefore $w'$. But no edge incident to $w'$ can enter $\mathcal{R}$, since it would either cross $e_\ell$, whose anchor is $w$ and hence it would coincide with $e=(w,w')$, it cannot cross $e$ by simplicity and if it crosses $e_t$, then it has to be incident to the anchor of $e_t$, which is $u$, but then the curve has to intersect the boundary of $\mathcal{R}$ twice which is impossible.

It remains to show that we do not introduce Pattern \reffanii by the flip-operation. For the sake of contradiction, assume that there is a new edge triple $T$ forming Pattern \reffanii after the flip. Note that $T$ involves at least one flipped edge $e_i^\ell \in E_\ell$. We will first establish that the anchor of $T$ is either $w$ or $u$. Suppose that $e_i^\ell$ is not incident to the anchor of $T$, then the anchor of $T$ is $w$ (since all edges in $E_\ell$ have $w$ as their anchor). Otherwise, the edge crossing $e_i^\ell$ in $T$ also crosses $e_r$, hence $u$ is the anchor of $T$. In the case that $w$ is the anchor of $T$, observe that all edges in $E_\ell \cup E_r$ are crossed from the same side by edges incident to $w$ when directed from $u$ to their other endvertex. Hence, $T$ cannot form Pattern \reffanii. If $u$ is the anchor of $T$, it would require a non-empty region between $\gamma^i_1$ and another partial curve $\gamma^j_1$ with $j \neq i$ to form Pattern \reffanii, which is a contradiction to our construction.

Thus, we have established that $\Gamma'$ is indeed weakly fan-planar and we can now consider the number of Patterns \reffaniii. We distinguish two cases. 

\paragraph{Case 1: $\mathcal{H}$ forms a single heart.}

By Observation~\ref{obs:flip}, 
the drawing $\Gamma'[H]$ does not contain any edge triple forming Pattern \reffaniii. Hence, any new Pattern~\reffaniii in $\Gamma'$ involves either one or two flipped edges of $E_\ell$. In the latter case, since edges in $E_\ell$ do not cross by simplicity, this would require a non-empty region between the first partial curves of two edges in $E_\ell$, which is a contradiction to our construction. 

Suppose now that one edge of $E_\ell$ and two edges $e_1, e_2$ incident to $w$ form a new Pattern \reffaniii. Since the edges in $E_\ell$ follow the curve of $e_r$ after the flip, $e_r$ is necessarily crossed by one of the two edges incident to $w$, say $e_1$, in order to contain $u$ in a closed region. But then $e_1$ would be already present in $\Gamma$ and belong to the set $E_b$, as $e, e_\ell, e_r$ form Pattern \reffaniii, a contradiction to $E_b = \emptyset$. 

We conclude that no new Pattern \reffaniii is introduced by Observation~\ref{obs:flip} while all triples in $H$ forming \reffaniii are eliminated, i.e.,  the overall number  is reduced.

\paragraph{Case 2: $\mathcal{H}$ and $\mathcal{H}'$ form a double heart.} First, observe that all edges in the valves $E_\ell \cup E_r$ of $\mathcal{H}$ are anchored by $w$ and all edges of the valves $E_t \cup E_b$ of $\mathcal{H}'$ are anchored by $u$. In particular, every edge \emph{forming} $\mathcal{H}$ and $\mathcal{H}'$ is contained in one of the four edge sets $E_\ell,E_r,E_t,E_b$. Suppose now that there is a third heart $\mathcal{H}''$ distinct from $\mathcal{H}$ that forms a double heart with $\mathcal{H}'$. Hence, there are crossings between the valves of $\mathcal{H}'$ and $\mathcal{H}''$. It follows, that the edges in the valves of $\mathcal{H}''$ are necessarily incident to $u$ by \reffani, but then all crossings between edges of $H'$ and $H''$ occur from the same side as between edges of $H$ and $H'$, as otherwise Pattern \reffanii would be present in $\Gamma$. It follows that the hearts $\mathcal{H}$ and $\mathcal{H}''$ are identical. Thus, we can consider $\mathcal{H}$ and $\mathcal{H}'$ symmetrically, since in a double heart one always has the other as its counterpart. In this case, the flip of $E_\ell$ can indeed form new Patterns~\reffaniii. However, as discussed in the previous case, this can only occur when a flipped edge $e_\ell \in E_\ell$ forms a heart with an edge $e_t \in E_t$ and $e_b \in E_b$. Thus, all new Patterns~\reffaniii are contained in the single heart $\mathcal{H}'$ where $E_r$ takes the role of $E_t$ in the discussion of the previous case; see Fig.~\ref{fig:double-heart-pre}. We can now proceed as in the previous case by flipping $E_b$ which eliminates all triples forming Pattern~\reffaniii as $E_\ell=\emptyset$; see Fig.~\ref{fig:double-heart-result}. 
\qed\end{proof}

\begin{figure}[t]
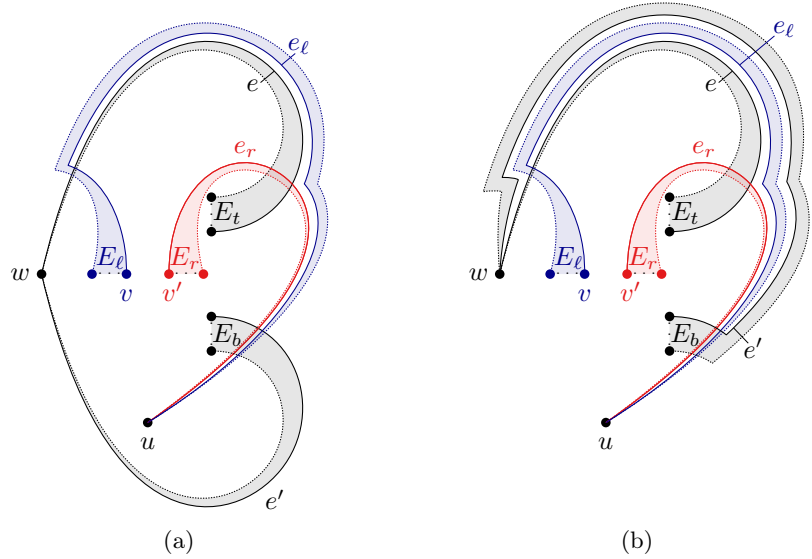

\centering
\begin{subfigure}[b]{.48\textwidth}
\centering
\includegraphics[scale=1,page=7]{figures/double-heart}
\subcaption{}
\label{fig:double-heart-pre}
\end{subfigure}
\hfil
\begin{subfigure}[b]{.48\textwidth}
\centering
\includegraphics[scale=1,page=8]{figures/double-heart}
\subcaption{}
\label{fig:double-heart-result}
\end{subfigure}
\caption{Illustration of second flip operation in the double heart case in Lemma~\ref{lem:no-two-edges}.}
\label{fig:double-heart-rerouting}
\end{figure}

So far, we investigated minimal weakly fan-planar drawings and their properties. 
One last ingredient is needed to prove Theorem~\ref{thm:density}. 
Given a graph~$G$ with a minimal weakly fan-planar drawing, 
we will sometimes reduce the number of Patterns~\reffaniii by modifying~$G$ 
into a different, weakly fan-planar graph~$G'$ with the same number of vertices 
and with the same number of edges.
The following lemma states when this modification is possible; see Appendix~\ref{app:5} for the proof:  

\begin{restatable}{lemma}{five}\label{lm:5}
Let $e_\ell,e_r$ and $e=(w,w')$ be a heart $\mathcal{H}$ in a minimal weakly fan-planar drawing $\Gamma$ of a graph $G=(V,E)$, let $\mathcal{L}$ be the closed curve that consists of the partial curves of the edges $e,e_\ell,e_r$  up to the crossing points~$x_\ell$ and~$x_r$ and let
$p_1,p_2$ be two common neighbors of $w$ and $w'$ outside of $\mathcal{L}$. Then, there is a graph $G'=(V', E')$ with $|V'|=|V|$ and $|E'|=|E|$ that admits a fan-planar drawing~$\Gamma'$ with fewer edge triples forming Pattern~\reffaniii than~$\Gamma$ contains.
\end{restatable}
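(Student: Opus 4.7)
The plan is to perform an edge swap that destroys the heart $\mathcal{H}$ without changing the vertex or edge count: I would remove $e=(w,w')$ from $G$ and introduce in its place a new edge $(p_1,p_2)$ between the two common neighbors of $w$ and $w'$. Since $p_1,p_2$ lie outside $\mathcal{L}$ while $w,w'$ lie in the region enclosed by $\mathcal{L}$, the four bipartite edges $(w,p_1),(w,p_2),(w',p_1),(w',p_2)$ all cross $\mathcal{L}$. By the definition of a heart, the portion of $e$ between $x_\ell$ and $x_r$ is uncrossed, so each of these four arcs must enter the interior of $\mathcal{L}$ through either $e_\ell$ or $e_r$; the idea is to use two of them as a corridor along which to route the new edge $(p_1,p_2)$ through the region vacated by~$e$.

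Concretely, I would set $G'=(V,(E\setminus\{e\})\cup\{(p_1,p_2)\})$ and obtain $\Gamma'$ from $\Gamma$ by deleting the curve of $e$ and drawing a curve for $(p_1,p_2)$ that tracks $(p_1,w)$ closely until it nears $w$, makes a short detour through the vacated neighborhood of $w$ inside $\mathcal{L}$, and then tracks $(p_2,w)$ back out to~$p_2$. The new curve inherits its crossings almost verbatim from the two arcs it hugs, up to the removed crossing with~$e$; a symmetric routing using $(p_1,w')$ and $(p_2,w')$ is available if the first choice is obstructed. If $(p_1,p_2)$ is already an edge of $G$, I would instead exchange $e$ for a different non-edge---for instance by first rerouting one of the $(w,p_i)$ edges through the vacated interior (which Lemma~\ref{lem:no-two-edges} permits in a minimal drawing) and then adding a parallel non-edge along the same corridor, or by selecting a distinct pair of common neighbors whose connecting edge is absent.

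To confirm that $\Gamma'$ is weakly fan-planar, I would verify that the new curve for $(p_1,p_2)$ crosses only edges that already crossed the arcs it hugs; each such crossing therefore still admits a consistent anchor so Pattern~\reffani is preserved, and the side of the crossing is also unchanged, ruling out Pattern~\reffanii. Counting Pattern~\reffaniii triples, the heart $\mathcal{H}$ disappears together with~$e$, and any new Pattern~\reffaniii involving $(p_1,p_2)$ would require a bounded cell trapping both of its endpoints, which the chosen routing precludes because $p_1$ and $p_2$ are incident to the very arcs being tracked.

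The hardest part, I expect, will be the Pattern~\reffaniii accounting: showing that removing~$e$ does not merely shift a heart to another triple involving the newly inserted edge $(p_1,p_2)$, and handling the subcase $(p_1,p_2)\in E$ robustly. This will require a careful local topological analysis around the interior of $\mathcal{L}$, controlling how $(p_1,p_2)$ threads between the two valves $E_\ell$ and $E_r$ of the heart, and invoking the minimality of $\Gamma$ together with Lemma~\ref{lem:no-two-edges} to rule out pathological configurations in which the swap could spawn a new heart on some other common neighbor of $w$ and $w'$.
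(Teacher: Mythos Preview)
Your swap idea has a genuine obstruction at the fan-planarity check, not just at the bookkeeping stage. After you delete $e$, the edge $e_\ell$ is still crossed by $(w,p_1)$ and $(w,p_2)$ (and typically by further edges $(w,w_p),(w,w_s),\dots$), so its anchor in $\Gamma'$ remains~$w$. Your curve for $(p_1,p_2)$, however, tracks $(p_1,w)$ into $\mathcal{L}$ and therefore crosses $e_\ell$; since $(p_1,p_2)$ is not incident to~$w$, this single crossing already creates Pattern~\reffani on~$e_\ell$. If instead you exit along $(w,p_2)$ you cross $e_\ell$ a second time, violating simplicity; if you exit along $(w',p_2)$ through $e_r$ you hit the symmetric problem, since $e_r$ keeps anchor~$w'$. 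In short, both $p_1$ and $p_2$ lie \emph{outside} $\mathcal{L}$, and any curve joining them through the interior must pierce $e_\ell$ or $e_r$, whose anchors are pinned to $w$ and $w'$ by the very edges $(w,p_i),(w',p_i)$ you are relying on. The fallback for $(p_1,p_2)\in E$ inherits the same obstruction.

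The paper sidesteps this by choosing the replacement edge $(y_{\mathcal{K}},y_{\mathcal{L}})$ with one endpoint \emph{inside} $\mathcal{L}$ and one outside. The hypothesis on $p_1,p_2$ is not used to name the new endpoints; it is used to carve out a nested region $\mathcal{K}$ outside $\mathcal{L}$ (bounded by $(w,p_2),(w',p_2)$ and the heart) in which a suitable $y_{\mathcal{K}}$ can be located. The curve $\gamma$ is then assembled from crossing-free sub-arcs of $e_\ell$ and of the predecessor/successor edges $e_p,e_s$ of $e$ along~$e_\ell$, so that $\gamma$ crosses at most one edge (namely $e_s$, and only in the last subcase), and that crossing respects the existing anchor. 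Because $y_{\mathcal{L}}\in\{v,w_s\}$ lies strictly inside $\mathcal{L}$ while $y_{\mathcal{K}}$ lies in~$\mathcal{K}$, and $y_{\mathcal{K}}$ can be adjacent inside $\mathcal{L}$ only to $w$ or~$w'$, the edge $(y_{\mathcal{K}},y_{\mathcal{L}})$ is automatically absent from~$G$, which also resolves the multiplicity issue you flagged.
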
 
We are now ready to prove our main theorem:

\begin{theorem} \label{thm:density}
A weakly fan-planar graph~$G$ with~$n$ vertices has at most $5n-10$ edges.
\end{theorem}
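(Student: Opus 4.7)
The plan is to prove the theorem by induction on the number of edge triples that form Pattern~\reffaniii in a minimal weakly fan-planar drawing of $G$. In the base case this number is zero, so the drawing is strongly fan-planar, and the bound $5n-10$ is exactly the known edge-density result for strongly fan-planar graphs~\cite{DBLP:journals/combinatorics/0001U22}. The goal is therefore to show that any positive number of Pattern~\reffaniii triples can be strictly decreased without changing $n$ or $m$, which would let me invoke the induction hypothesis and deduce $m \le 5n - 10$.

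For the inductive step I would fix a minimal weakly fan-planar drawing $\Gamma$ of $G$ with at least one Pattern~\reffaniii. Lemma~\ref{lem:heart} produces a heart $\mathcal{H}=(e,e_\ell,e_r)$ with $e=(w,w')$, common endpoint $u$ of $e_\ell$ and $e_r$, and associated closed curve $\mathcal{L}$ bounded by the relevant portions of $e$, $e_\ell$, and $e_r$. Lemma~\ref{lem:no-two-edges} already gives structural control: no edge other than $e$ crosses both $e_\ell$ and $e_r$. This restricts what can live ``inside'' $\mathcal{L}$ and isolates $\mathcal{H}$ enough that local modifications do not ripple through the rest of $\Gamma$.

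The key move is to feed $\mathcal{H}$ into Lemma~\ref{lm:5}: if the endpoints $w$ and $w'$ of $e$ share at least two common neighbors outside $\mathcal{L}$, then there is a graph $G'$ with $|V(G')|=n$ and $|E(G')|=m$ that admits a weakly fan-planar drawing with strictly fewer Pattern~\reffaniii triples than $\Gamma$. Applying the induction hypothesis to $G'$ yields $m = |E(G')| \le 5n-10$, as desired.

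The main obstacle, and the case I expect to require the most care, is when $w$ and $w'$ have fewer than two common neighbors outside $\mathcal{L}$. I plan to argue that such a ``thin'' heart is incompatible with maximum edge density: since the interior of $\mathcal{L}$ contains only the uncrossed middle piece of $e$ (by Lemma~\ref{lem:no-two-edges}), at most one vertex can play the role of a common $\{w,w'\}$-neighbor inside, and the global neighborhood of $\{w,w'\}$ is correspondingly small. In this degenerate situation I would either remove a carefully chosen edge incident to $u$, $w$, or $w'$ and then reintroduce an edge between two non-adjacent vertices of the heart to strictly reduce the Pattern~\reffaniii count while preserving $n$ and $m$, or delete one of $w,w'$ and invoke an outer induction on $n$, using that the missing vertex cannot have been contributing five ``new'' edges to the density count. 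Combined with the Lemma~\ref{lm:5} case, this exhausts all possibilities and completes the induction.
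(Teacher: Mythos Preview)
Your inductive framework, base case, and use of Lemma~\ref{lm:5} in the ``two common neighbours'' case all match the paper. The gap is in the remaining case, where $w$ and~$w'$ have at most one common neighbour outside~$\mathcal{L}$.

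Your sketch for this case rests on the claim that ``the interior of~$\mathcal{L}$ contains only the uncrossed middle piece of~$e$''. That is not what Lemma~\ref{lem:no-two-edges} says: it only forbids a single edge (other than~$e$) from crossing \emph{both} $e_\ell$ and~$e_r$. The bounded region~$\mathcal{L}$ can still contain arbitrarily many vertices and edges of~$G$, as well as edges that cross exactly one of $e_\ell,e_r$. Consequently there is no reason for~$w$ or~$w'$ to have small degree, so the ``delete a vertex and induct on~$n$'' branch does not go through; and there is no evident non-edge inside~$\mathcal{L}$ to which you could reroute~$e$, so the ``swap an edge'' branch is not available either (Lemma~\ref{lm:5} genuinely needs the two outside neighbours to find its replacement edge).

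What the paper actually does in this case is a decomposition argument, not a local rerouting. It splits~$G$ into the subgraph~$G_1$ lying in the closed bounded region of~$\mathcal{L}$ and the subgraph~$G_2$ in the closed unbounded region (sharing only~$u$). Inside~$G_1$ one flips the left valve~$E_\ell$; because every edge that would cross the new portions also crosses~$e_r$ and hence lies partly outside~$\mathcal{L}$, the flip creates no new crossings in~$G_1$ and strictly reduces the Pattern~\reffaniii count there. The $k$~edges of~$G$ that belong to neither~$G_1$ nor~$G_2$ are all incident to~$w$ or~$w'$ and cross only edges at~$u$; the ``at most one common neighbour'' hypothesis is used precisely here, to argue that adding a single new vertex~$v^\ast$ inside~$\mathcal{L}$ to~$G_2$, joined to~$u$ and to all neighbours of~$w,w'$ in~$G_2$, recovers at least~$k$ edges in a weakly fan-planar~$G_2'$ on $r+1$ vertices. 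Applying the induction hypothesis to~$G_1$ and~$G_2'$ separately and summing gives $5(n-r+1)-10 + 5(r+1)-10 = 5n-10$. This decomposition-plus-augmentation step is the missing idea in your proposal.
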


\begin{proof}
    We proceed by induction on the number of edge triples forming Pattern~\reffaniii  in a \emph{minimal} weakly fan-planar drawing of~$G$. In the base case, this number is zero, so the drawing is strongly fan-planar. Then~$G$ is strongly fan-planar, and has at most $5n - 10$ edges by~\cite{DBLP:journals/combinatorics/0001U22}.
    For the induction step, consider a graph~$G$ and let $\Gamma$ be a minimal weakly fan-planar drawing of~$G$.
	
    By Lemma~\ref{lem:heart}, $\Gamma$ contains a heart~$e=(w,w'), e_\ell, e_r$, see Fig.~\ref{fig:theorem2:g}. If  $w$ and $w'$ have at least two common neighbors outside of $\cal L$, we obtain a graph $G'$ with the same number of vertices and edges with fewer edge triples forming \reffaniii as stated by Lemma~\ref{lm:5} and proceed with $G'$.
    
    \begin{figure}[t]
        \centerline{\includegraphics[scale=.9,page=22]{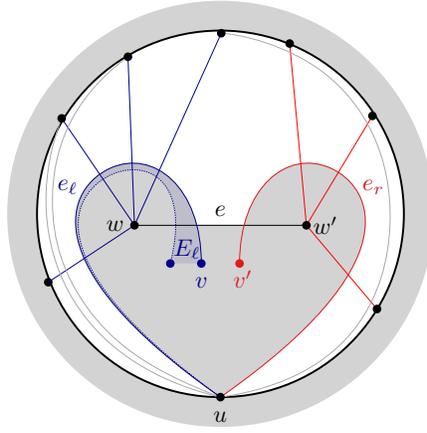}}
        \caption{Illustrations for the proof of Theorem~\ref{thm:density}.}
        \label{fig:theorem2:g}
    \end{figure}
    
    Otherwise, denote by $G_1$ the subgraph of~$G$ consisting of those vertices and edges of~$G$ that lie (entirely) in the \emph{bounded closed} region bounded by~$\mathcal{L}$. In particular, the vertices $u,w,v,v'$, and $w'$, and the edges~$e_\ell$,$e_r$, and $e$ all belong to~$G_1$.
    Similarly, let $G_2$ be the subgraph of~$G$ consisting of those vertices and edges of~$G$ that lie entirely in the \emph{unbounded closed} region bounded by~$\mathcal{L}$.
    In particular, vertex $u \in G_2$, but none of the edges~$e, e_\ell, e_r$ is in~$G_2$. Let $|V[G_2]| = r$ and thus $|V[G_1]| = n-(r-1)$, as $u$ is part of both~$G_1$ and~$G_2$.  

Note that the graph $G$ contains edges  that are neither in $G_1$ nor in $G_2$. 
We will show how to augment $G_2$ to $G_2'$ so that it contains an equal number of extra edges. 
We will create new weakly fan-planar drawings~$\Gamma_1'$ and $\Gamma_2'$ for~$G_1$ and $G_2'$ 
that have at least one fewer edge triple forming Pattern~\reffaniii each.

We start with~$G_1$.
Let $E_\ell$ and $E_r$ be the left valve and the right valve of our heart, respectively, such that $e_\ell \in E_\ell$ and $e_r \in E_r$ holds.
Recall that $G_1$ lies entirely in the bounded region $\mathcal{L}$ defined by the partial curves of $e_\ell$, $e_r$ and $e$ up to their respective intersection points. In particular, this implies that the partial curve of $e_\ell$ between $u$ and $x_\ell$ is crossing free. Moreover, the partial curve of $e$ between $x_\ell$ and $x_r$ is also crossing free as $e_\ell$, $e_r$ and $e$ form a heart. 
Based on this observation, we flip the left valve $E_\ell$, obtaining~$\Gamma_1'$; see Fig.~\ref{fig:theorem2:g1}.

\paragraph{$\Gamma_1'$ is weakly fan-planar and contains fewer Pattern~\reffaniii.}
To show that $\Gamma_1'$ is weakly fan-planar and contains at least one Pattern~\reffaniii less than the drawing of $G_1$ in $\Gamma$, fix an edge $e^\ell_i \in E_\ell$ and let $\gamma^i$ be its curve in $\Gamma_1'$. Recall that 
 by Lemma~\ref{lem:crossing-first-part}, any new crossings of $e^\ell_i$ can only occur on the partial curve $\gamma_1^i \subset \gamma^i$. Since no two edges of $E_\ell$ intersect in $\Gamma_1'$ by construction and any other edge that would intersect $\gamma^i$ would also cross $e_r$ in $\Gamma$ and thus be contained both inside and outside $\mathcal{L}$ by Lemma~\ref{lem:no-two-edges}, $\gamma_1^i$ is uncrossed in $\Gamma_1'$. Hence, all crossings of $e^\ell_i \in E_\ell$ are on the part of $\gamma^i$ which was inherited from $\Gamma$; it follows that no new Pattern \reffani, \reffanii, and \reffaniii is introduced in $\Gamma_1'$.

Hence, our new weakly fan-planar drawing $\Gamma_1'$ has $|E_\ell| \times |E_r| \geq 1$ Pattern~\reffaniii less than $\Gamma$, and we can apply the inductive assumption to get 
	
\[|E(G_1)| \leq 5|V(G_1)| - 10.\]

\begin{figure}[t]
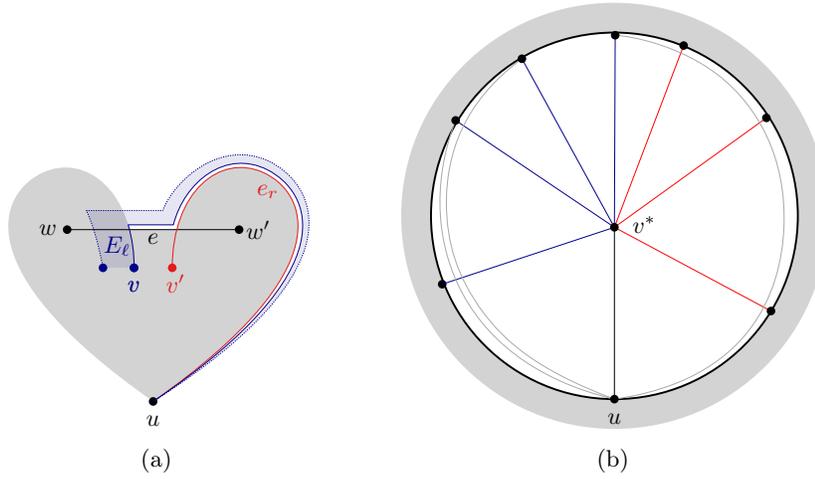

\centering
\begin{subfigure}[b]{.35\textwidth}
\centering
\includegraphics[scale=.9,page=23]{figures/double-heart}
\subcaption{}
\label{fig:theorem2:g1}
\end{subfigure}
\hfil
\begin{subfigure}[b]{.6\textwidth}
\centering
\includegraphics[scale=.9,page=24]{figures/double-heart}
\subcaption{}
\label{fig:theorem2:g2}
\end{subfigure}
\caption{Illustrations for the proof of Theorem~\ref{thm:density}.}
\label{fig:theorem2}
\end{figure}

\noindent Now we consider $G_2$ and the edges that are neither in $G_1$ nor in $G_2$.

\paragraph{Edges that are neither in $G_1$ nor in $G_2$.} 
Consider such an edge~$e'$. Clearly, $e'$ crosses~$\mathcal{L}$. 
This crossing cannot be on~$e$ by the heart property, so it must be on~$e_\ell$ or~$e_r$. 
The edge~$e'$ cannot cross \emph{both} $e_\ell$ and~$e_r$ by Lemma~\ref{lem:no-two-edges}, so~$e'$ either crosses~$e_\ell$ and must be incident to~$w$ or crosses~$e_r$ and must then be incident to~$w'$ by \reffani and \reffanii.
We claim that $e'$ can only cross edges incident to $u$ outside of $\mathcal{L}$; 
see the light gray edges in Fig.~\ref{fig:theorem2:g}. 
To see this, suppose for a contradiction that $e'$ is crossed by an edge $e''$ outside of $\mathcal{L}$ which is not incident to $u$. 
W.l.o.g. assume that $e'$ is incident to $w$ and crosses $e_\ell$, the other case is symmetric.
By \reffani, $e''$ is then necessarily incident to $v$, which is contained inside $\mathcal{L}$. 
However, we already established that only edges such as $e'$ which are incident to $w$ or $w'$ can leave $\mathcal{L}$, a contradiction to $e''$ crossing $e'$ outside of $\mathcal{L}$.
	 
\paragraph{Augmenting $G_2$.}
Let $k$ be the number of edges of $G$ neither in $G_1$ nor in~$G_2$.
Note that $w$ and $w'$ have at most one common neighbor outside of $\mathcal{L}$ (as otherwise we have already applied Lemma~\ref{lm:5}).
Then, we construct a new graph~$G'_2$ from~$G_2$ by adding edges as follows.
Recall that the weakly fan-planar drawing $\Gamma[G_2]$ derived from $\Gamma$ contains an empty region inside $\mathcal{L}$ and contains fewer triples forming Pattern~\reffaniii, as we removed the heart formed by~$e, e_\ell, e_r$.

We insert a single vertex $v^*$ inside this region, and connect it to all neighbors of $w$ and $w'$ in~$G_2$, and  to $u$; see Fig.~\ref{fig:theorem2:g2}. 
By assumption, $w$ and $w'$ share at most one such vertex and hence there are at least $k-1$ neighbors. Recall that any such edge crosses only edges incident to $u$ outside of $\cal L$ and therefore fan-planarity is maintained. Hence, we augmented $G_2$ to the weakly fan-planar graph $G_2'$ that contains $r+1$ vertices and onto which we can apply the induction hypothesis.
\medskip
 
\noindent We can now bound E(G) as follows:
\begin{align*}
    |E(G)| & = |E(G_1)| + |E(G_2)| + k = |E(G_1)| + |E(G'_2)| \\
    & \leq 5(n-r+1)-10 + 5(r+1)-10 
     = 5n-10
\end{align*}
which concludes the proof.\qed\end{proof}

\begin{figure}[t]
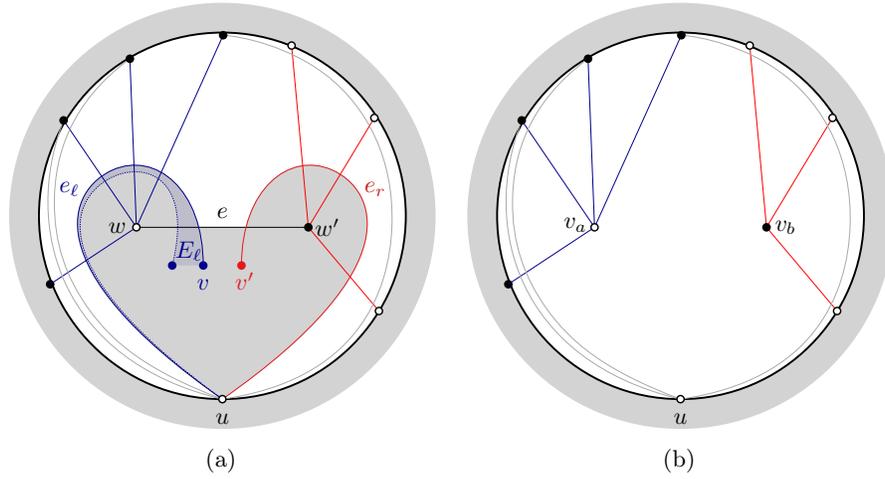

\centering
\begin{subfigure}[b]{.48\textwidth}
\centering
\includegraphics[scale=.9,page=20]{figures/double-heart}
\subcaption{}
\label{fig:theorem3:g}
\end{subfigure}
\hfil
\begin{subfigure}[b]{.48\textwidth}
\centering
\includegraphics[scale=.9,page=21]{figures/double-heart}
\subcaption{}
\label{fig:theorem3:g2}
\end{subfigure}
\caption{Illustrations for the proof of Theorem~\ref{thm:density-bipartite}.}
\label{fig:theorem3}
\end{figure}

\noindent For bipartite graphs, we proceed in a similar way.
\begin{theorem} \label{thm:density-bipartite}
An $n$-vertex bipartite weakly fan-planar graph has at most $4n-12$ edges.
\end{theorem}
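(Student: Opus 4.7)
The plan is to mirror the inductive proof of Theorem~\ref{thm:density}, changing the target bound from $5n-10$ to $4n-12$ and adapting the augmentation step to preserve bipartiteness. We induct on the number of edge triples forming Pattern~\reffaniii in a minimal weakly fan-planar drawing~$\Gamma$ of~$G$. In the base case, where this number is zero, the drawing is strongly fan-planar and the claim follows from the known $4n-12$ bound for bipartite strongly fan-planar graphs. For the inductive step, Lemma~\ref{lem:heart} gives a heart $e=(w,w'), e_\ell, e_r$ with associated closed curve~$\mathcal{L}$. A useful simplification is that, since $e=(w,w')$ is an edge of the bipartite graph~$G$, the vertices~$w$ and~$w'$ lie on opposite sides of the bipartition, and hence they share no common neighbors at all. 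In particular, the hypothesis of Lemma~\ref{lm:5} can never hold, so that branch of the case analysis from Theorem~\ref{thm:density} is vacuous in the bipartite setting. We then decompose~$G$ into the subgraphs $G_1$ inside~$\mathcal{L}$ (with $n-r+1$ vertices) and $G_2$ outside~$\mathcal{L}$ (with $r$ vertices), sharing only~$u$.

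For~$G_1$, the flip of the left valve $E_\ell$ carries over without modification: it produces a weakly fan-planar drawing of the bipartite subgraph~$G_1$ with strictly fewer Pattern~\reffaniii triples than~$\Gamma$, and the inductive hypothesis yields $|E(G_1)| \le 4(n-r+1)-12$.

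The main obstacle lies in augmenting~$G_2$. The $k$ edges of~$G$ that leave~$\mathcal{L}$ split into two groups: those incident to~$w$ (crossing~$e_\ell$) and those incident to~$w'$ (crossing~$e_r$), whose outer endpoints lie on opposite sides of the bipartition. Therefore the single connector vertex used in Theorem~\ref{thm:density} is incompatible with bipartiteness, and I plan to insert two new vertices $v_1^*$ and~$v_2^*$ into the empty interior of the former~$\mathcal{L}$: vertex~$v_1^*$ is placed on the same side of the bipartition as~$w$ and connected to all neighbors of~$w$ outside~$\mathcal{L}$, while~$v_2^*$ is placed on the same side as~$w'$ and connected to all neighbors of~$w'$ outside~$\mathcal{L}$. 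Because $w$ and~$w'$ share no neighbors, this adds exactly~$k$ edges and yields a bipartite graph~$G_2'$ on $r+2$ vertices. Each new edge is drawn by routing it from its $v_i^*$-endpoint through the now-empty interior of~$\mathcal{L}$ and then along the old curve of the corresponding outgoing edge, so it inherits exactly the old crossings with edges incident to~$u$. Consequently the drawing of~$G_2'$ is weakly fan-planar, the heart triple is destroyed, and no new Pattern~\reffaniii is introduced, so the inductive hypothesis applies to give $|E(G_2')| = |E(G_2)| + k \le 4(r+2)-12$. Summing the two bounds yields $|E(G)| = |E(G_1)|+|E(G_2)|+k \le 4(n-r+1)-12 + 4(r+2)-12 = 4n-12$, completing the induction.
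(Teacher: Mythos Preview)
Your proposal is correct and follows essentially the same approach as the paper: induct on the number of Pattern~\reffaniii triples, observe that bipartiteness makes $w$ and $w'$ share no neighbors so Lemma~\ref{lm:5} is unnecessary, and replace the single augmentation vertex by two vertices $v_1^*,v_2^*$ (the paper calls them $v_a,v_b$) to preserve bipartiteness while still accounting for all $k$ boundary-crossing edges. The arithmetic and the routing argument you sketch match the paper's proof.
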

\begin{proof}
	Let $\Gamma$ be a minimal weakly fan-planar drawing of $G$.
    We again proceed by induction on the number of edge triples forming Pattern~\reffaniii.
    In the base case, $\Gamma$ is strongly fan-planar and
	hence $G$ has at most $4n-12$ edges by~\cite{DBLP:conf/isaac/AngeliniB0PU18}.  
    In the induction step, we proceed as in the previous proof with two differences:
    First, since~$G$ is bipartite, $w$~and~$w'$ cannot have a common neighbor, 
    so we do not need to apply Lemma~\ref{lm:5}.
    Second, when constructing $G_2'$, instead of inserting a single vertex $v^*$, 
    we insert two vertices~$v_a$ and~$v_b$, 
    and connect~$v_a$ with the neighbors of~$w$ in~$G_2$,
    $v_b$~with the neighbors of~$w'$ in~$G_2$; thus maintaining bipartiteness; see Fig.~\ref{fig:theorem3:g2}. 
    In total, we get
	\begin{align*}
	|E(G)| & = |E(G_1)| + |E(G_2)| + k =
    |E(G_1)| + |E(G'_2)| \\
	& \leq 4(n-r+1)-12 + 4(r+2)-12 = 4n-12,
	\end{align*}
	which concludes the proof.
\qed\end{proof}

\bibliographystyle{splncs04}
\bibliography{references}

\ifarxiv
\newpage

\appendix
\section{Proof of Lemma~\ref{lm:5}}
\label{app:5}
\five*

\begin{proof} To prove the claim, will remove $e$, which in turn removes all triples of Pattern~\reffaniii of $\mathcal{H}$, and add another edge $(y_\mathcal{K},y_\mathcal{L}) \notin E$ to account for the missing edge, maintaining weak fan-planarity in the resulting drawing $\Gamma'$ of $G'$. 

Assume w.l.o.g. that $\cal H$ is chosen so that $\cal L$ does not contain any other heart~$\cal H'$.  We aim to find a sequence of curve-segments in $\Gamma$, along which we can insert the edge $(y_\mathcal{K},y_\mathcal{L})$ without introducing Pattern~\reffani to \reffaniii. We refer to the curve constructed in this way as $\gamma$.
Since the edges of $w$ and $w'$ to $p_1$ and $p_2$ both cross edges incident to $u$ they cannot cross each other. Hence there exists a nesting of $p_1$ and $p_2$, w.l.o.g. assume that $p_1$ is the inner one, i.e., the edge $(w,p_1)$ is encountered before the edge $(w,p_2)$ when following $e_\ell$ starting at $v$. In other words, $p_1$ is contained inside a region which we denote as $\mathcal{K}$ that is delimited by $e$,$e_\ell$,$e_r$,$(w,p_2)$ and $(w',p_2)$. Further, denote by $w_p$ and $w_s$ the \emph{predecessor} and \emph{successor} of the edge $e$, i.e., the edge $e_p = (w,w_p)$ crosses $e_\ell$ immediately before $e$ and $e_s = (w,w_s)$ immediately after $e$ when following $e_\ell$ starting at $u$. Note that, possibly $w_p = p_1$ and $w_s=\texttt{NIL}$; see Fig.~\ref{fig:5:1}.

\begin{figure}[t]
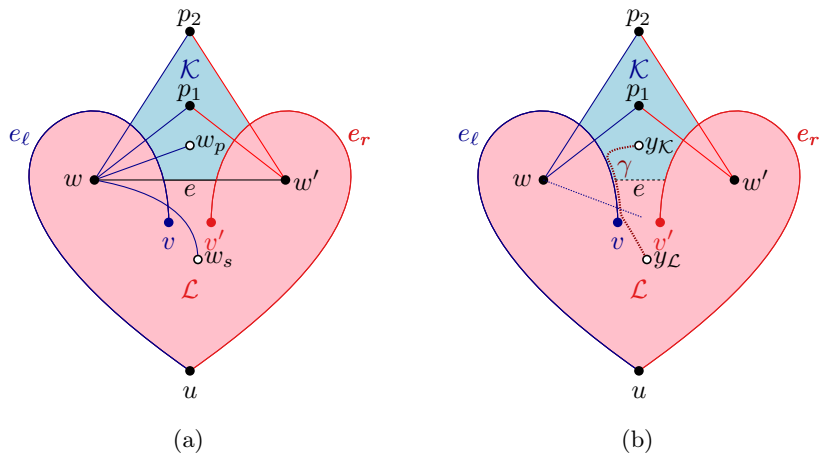

\centerline{\begin{subfigure}[b]{.45\textwidth}
\centerline{\includegraphics[scale=1,page=12]{figures/double-heart}}
\subcaption{}
\label{fig:5:1}
\end{subfigure}
\hfil
\begin{subfigure}[b]{.45\textwidth}
\centerline{\includegraphics[scale=1,page=19]{figures/double-heart}}
\subcaption{}
\label{fig:5:0}
\end{subfigure}}
\caption{(a)~Regions $\mathcal{K}$ and 
$\mathcal{L}$ and vertices $w_p$ and $w_s$. (b)~Replacement of $e$.}
\label{fig:5.0}
\end{figure}

We first identify a suitable vertex $y_\mathcal{K}$ and crossing-free subcurve of $\gamma$ that connects $y_\mathcal{K}$ with the segment of $e$ between $x_\ell$ and $x_r$. 
Consider $w_p$. By Lemma~\ref{lem:no-two-edges}, $(w,w_p)$ cannot cross both $e_\ell$ and $e_r$, hence $w_p$ lies outside of $\mathcal{L}$. Further, since edges of $w$ and $w'$ do not intersect, $w_p$ is contained inside $\mathcal{K}$. Let $x_p$ be the crossing point of $e_p$ and $e_\ell$ in $\Gamma$. Observe that as $e_p$ is the predecessor of $e$, the curve-segment of $e_\ell$ between $x_\ell$ and $x_p$ is crossing-free, hence we add it to $\gamma$. To identify the endpoint $y$, consider the curve-segment $\gamma_p$ of $e_p$ between $x_p$ and $w_p$.

\begin{figure}[t]
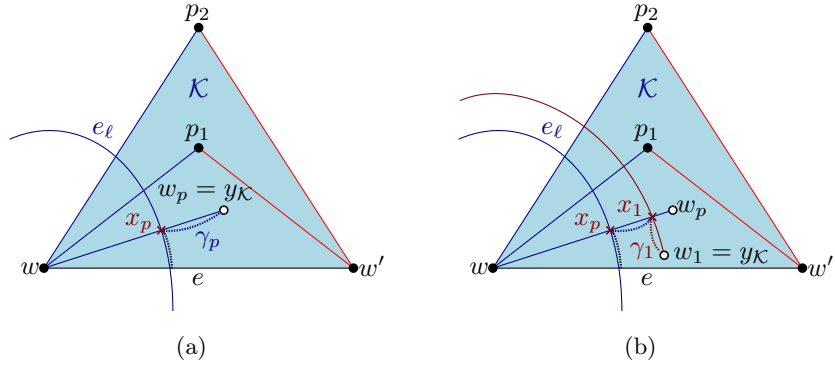

\centering
\begin{subfigure}[b]{.45\textwidth}
\centering
\includegraphics[scale=1,page=13]{figures/double-heart}
\subcaption{}
\label{fig:5:2}
\end{subfigure}
\hfil
\begin{subfigure}[b]{.45\textwidth}
\centering
\includegraphics[scale=1,page=14]{figures/double-heart}
\subcaption{}
\label{fig:5:3}
\end{subfigure}
\caption{Identification of $y_\mathcal{K}$.}
\label{fig:5.1}
\end{figure}

First, assume that  $\gamma_p$ is crossing-free. In this case, we identify $y_\mathcal{K}$ with $w_p$ and append $\gamma_p$ to $\gamma$; see Fig.~\ref{fig:5:2}.

Second, assume that $\gamma_p$ has at least one crossing. Let $e_1$ be the first edge crossing $e_p$ when traversing $\gamma_p$ from $x_p$. Let $x_1$ be the intersection of $e_1$  and $e_p$. Observe that the segment from $x_1$ to $x_p$ on $e_p$ is crossing-free and hence can be appended to $\gamma$. Since $(x_\ell,x_r)$ is crossing-free by definition, the anchor of $e_p$ must be $u$. Hence, $e_1$ also intersects the edge $(w,p_2)$, its anchor is $w$, and its endpoint $w_1$ lies inside $\mathcal{K}$ (its other endpoint is $u$). Finally, we observe that the curve $\gamma_1$ of $e_1$ between $x_1$ and $w_1$ is crossing free, since otherwise $(w,w_p)$ is not the predecessor of $e$ at $w$. Thus, we fix $w_1$ as $y_\mathcal{K}$ and append $\gamma_1$ to $\gamma$; see Fig.~\ref{fig:5:3}.

Observe that in both cases $y_\mathcal{K}$ lies in $\mathcal{K}$. Also note that $y_\mathcal{K}$ can only be adjacent to $w$ and $w'$ inside $\mathcal{L}$ as the segment of $e$ between $x_\ell$ and $x_r$ is uncrossed whereas $w$ and $w'$ are the anchors of $e_\ell$ and $e_r$, respectively. Hence, we can be sure that $(y_\mathcal{K},y_\mathcal{L})$ does not exist in $G$ yet as long as $y_\mathcal{L} \notin \{w,w',u\}$ and $y_\mathcal{L}$ lies in $\mathcal{L}$; see Fig.~\ref{fig:5:0}. 
Thus, it remains to extend $\gamma$ starting from its intersection with $e$ to a suitable vertex $y_\mathcal{L}$ that is contained in $\mathcal{L}$.

\begin{figure}[t]
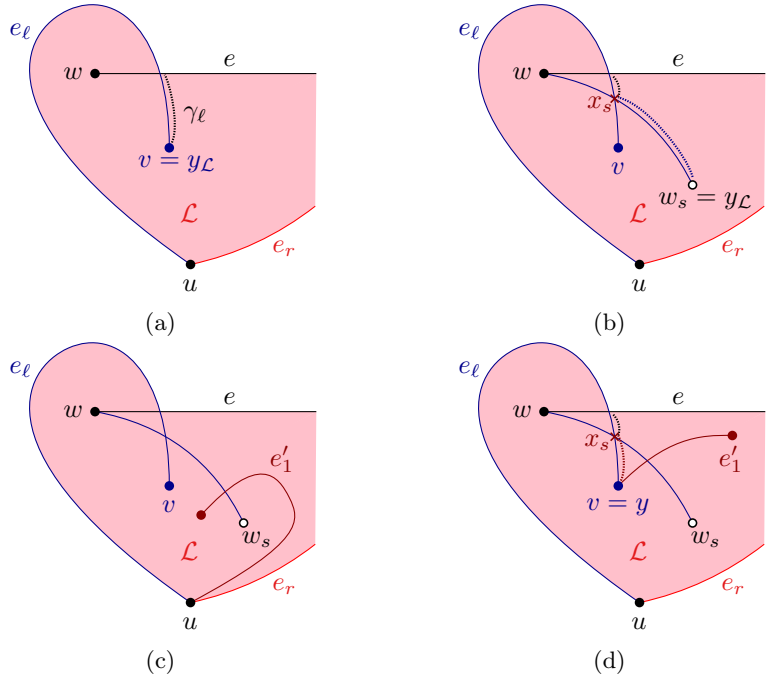

\centering
\begin{subfigure}[b]{.45\textwidth}
\centering
\includegraphics[scale=1,page=15]{figures/double-heart}
\subcaption{}
\label{fig:5:4}
\end{subfigure}
\hfil
\begin{subfigure}[b]{.45\textwidth}
\centering
\includegraphics[scale=1,page=16]{figures/double-heart}
\subcaption{}
\label{fig:5:5}
\end{subfigure}

\begin{subfigure}[b]{.45\textwidth}
\centering
\includegraphics[scale=1,page=17]{figures/double-heart}
\subcaption{}
\label{fig:5:6}
\end{subfigure}
\hfil
\begin{subfigure}[b]{.45\textwidth}
\centering
\includegraphics[scale=1,page=18]{figures/double-heart}
\subcaption{}
\label{fig:5:7}
\end{subfigure}
\caption{Identification of $y_\mathcal{L}$.}
\label{fig:5.2}
\end{figure}

Consider the curve $\gamma_\ell$ of $e_\ell$ from $x_\ell$ to $v$.  First, if $\gamma_\ell$ is crossing-free, we identify $y_\mathcal{L}$ with $v$ and add $\gamma_\ell$ to $\gamma$; see Fig.~\ref{fig:5:4}.

Otherwise, $\gamma_\ell$ contains a crossing. Assume momentarily that $\gamma_\ell$ is crossed by an edge incident to $w'$. In this scenario, we repeat the argumentation for $w'$ for which this case cannot occur at the same time. 
Thus, we conclude that $e_s$ must be present. Denote by $x_s$ the crossing between $e_\ell$ and $e_s$. We add the curve segment between $x_\ell$ and $x_s$ to $\gamma$.

If the curve of $e_s$ between $w_s$ and $x_s$ is crossing-free, we add it to $\gamma$ and identify $w_s$ with $y_\mathcal{L}$; see Fig.~\ref{fig:5:5}. 

Note that in all cases discussed so far, $\gamma$ is crossing-free when removing $e$. Thus, when replacing $e$ with $(y_\mathcal{K},y_\mathcal{L})$, the number of \reffani stays at most the same while the number of \reffaniii decreases by at least 1; see Fig.~\ref{fig:5:0} (in this case the dotted edge at $w$ does not exist).

Finally, it remains to consider the case where there is an edge $e_1'$  crossing $e_s$. We choose $e_1'$ such that it is the first edge crossing $e_s$ after $x_s$ when traversing $e_s$ from $x_s$ to $w_s$. Note that the anchor of $e_s$ is either $v$ or $u$. If it is $u$, we observe that $e_\ell$, $e_s$ and $e_1'$ form a heart inside $\cal L$ (see Fig.~\ref{fig:5:6}); a contradiction to the choice of $\cal H$. Thus, the anchor of $e_s$ must be $v$. In this case, we again choose $y_\mathcal{L}=v$ and append the segment between $x_s$ and $v$ to $\gamma$. I this case $\gamma$ will cross $(w,w_s)$. However, the anchor of $(w,w_s)$ is $y_\mathcal{L} = v$ so the number of \reffani is maintained. Moreover, since $\gamma$ intersects $(w,w_s)$ in between $e_\ell$ and $e_1'$ the number of \reffaniii still decreases by at least 1 when replacing $e$ with $(y_\mathcal{K},y_\mathcal{L})$; see Fig.~\ref{fig:5:0}. 
\qed\end{proof}
\fi
\end{document}